\documentclass[11pt]{article}
\usepackage{amsmath,amsfonts,amsthm,amssymb, mathtools}
\usepackage{mathrsfs, graphicx,color,latexsym, tikz, calc,
}
\usepackage{tikz-cd}
\usepackage{graphicx}
\usepackage{epstopdf}
\usepackage{enumerate}
\usepackage{caption}
\usepackage{stmaryrd}
\usepackage{hyperref}
\usepackage{cite}

\usetikzlibrary{shadows}
\usetikzlibrary{patterns,arrows,decorations.pathreplacing}

\usepackage[margin=2.5cm]{geometry}
\usepackage{ulem}

\newtheorem{theorem}{Theorem}[section]

\newtheorem{lemma}[theorem]{Lemma}

\newtheorem{problem}[theorem]{Problem}
\newtheorem{conjecture}[theorem]{Conjecture}
\newtheorem{corollary}[theorem]{Corollary}

\newtheorem{prop}[theorem]{Proposition}
\newtheorem{claim}{Claim}[section]

\newtheorem{example}{Example}[section]

\allowdisplaybreaks[4]

\title{\bf \Large }
\date{\today }
\title{{\bf \Large 
The Suda–Tanaka-Tokushige conjecture for $\mathbf{p}$-biased  intersecting families}\footnote{ Lihua Feng was supported by the NSFC (Nos. 12271527 and 12471022) and NSF of Qinghai Province (No. 2025-ZJ-902T). E-mail addresses: \url{wuyjmath@163.com} (Y. Wu), \url{fenglh@163.com} (L. Feng). }
\author{
{\small  Yongjiang Wu,\ \ Lihua Feng\footnote{Corresponding author}
}\\[2mm]
\small School of Mathematics and Statistics, HNP-LAMA, Central South University\\
 \small Changsha, Hunan, 410083, China\\ 
}}

\begin{document}
\maketitle
\begin{abstract}  
In 2017, Suda, Tanaka and Tokushige conjectured that if $1>p_1\ge\cdots\ge p_n>0$ with $p_3\le \frac{1}{2}$, then every intersecting family $\mathcal A\subseteq 2^{[n]}$ satisfies $\mu_{\mathbf{p}}(\mathcal A)\le p_1$, where $\mu_{\mathbf{p}}$ is the non-uniform product measure defined by $\mu_{\mathbf{p}}(\mathcal{A})=\sum_{A\in\mathcal{A}} \prod_{i\in A} p_i \prod_{j\in [n]\setminus A}(1-p_j)$. In addition, if $p_1 > p_3$ or $p_1 < \frac{1}{2}$, then equality holds if and only if $\mathcal{A}$ is a  star centered at some $i \in [n]$ with $p_i = p_1$.
In this paper, we prove this conjecture in the following stronger $t$-intersecting form: for any $t\ge 1$, if $p_{t+2}\le \frac{1}{t+1}$, then  every $t$-intersecting family $\mathcal{A} \subseteq 2^{[n]}$ satisfies 
 $\mu_{\mathbf{p}}(\mathcal A)\le \prod_{i=1}^t p_i$. Moreover,  when $p_{t+2}<\frac{1}{t+1}$, equality holds if and only if  $\mathcal{A}=\{A\subseteq [n]: T\subseteq A\}$ for some $T\in \binom{[n]}{t}$ with $\prod_{i\in T} p_i=\prod_{i=1}^t p_i$. Our result unifies and generalizes the classical theorems of
Fishburn-Frankl-Freed-Lagarias-Odlyzko and Friedgut.
\end{abstract}

{\bf AMS Classification}:  05C65; 05D05 

{\bf Keywords}: Intersecting families; $\mathbf{p}$-biased measure; Suda–Tanaka-Tokushige conjecture

\section{Introduction}
For integers $a \leq b$, we denote $[a,b]=\{a,a+1,\dots,b\}$. Let $[n]:=[1,n]$ and let $2^{[n]}$ denote the power set of $[n]$. Two families $\mathcal{A},\mathcal{B}\subseteq 2^{[n]}$ are called \textit{cross-intersecting} if $A\cap B\neq\emptyset$ for all $A\in\mathcal{A}$, $B\in\mathcal{B}$. More generally, they are called \textit{cross-$t$-intersecting} if $|A\cap B|\ge t$ for all $A\in\mathcal{A}$, $B\in\mathcal{B}$. When $\mathcal{A}=\mathcal{B}$, the family is simply called \textit{$t$-intersecting} if $|A\cap A'|\ge t$ for all $A,\ A'\in\mathcal{A}$. In particular, when $t=1$, $\mathcal{A}$ is called \textit{intersecting}. A family of the form $\{A\subseteq[n]:T\subseteq A\}$ for some $T\in\binom{[n]}{t}$ is called a \textit{$t$-star}.  A $1$-star is simply called a \textit{star}.

Intersection theorems are among the central pillars of extremal set theory. The classical Erd\H{o}s--Ko--Rado theorem \cite{E61} states that for $n \ge 2k$, every intersecting family $\mathcal{A} \subseteq \binom{[n]}{k}$ satisfies $|\mathcal{A}| \le \binom{n-1}{k-1}$, with equality if and only if $\mathcal{A}=\big\{A\in \binom{[n]}{k}: x\in A\big\}$ for some $x \in [n]$, provided $n>2k$.
 This foundational result has inspired numerous generalizations, among which the $p$-biased measure formulation has emerged as a particularly natural and fruitful direction. For $0<p<1$, the \textit{$p$-biased measure} of a family $\mathcal{A}\subseteq 2^{[n]}$ is defined by
$$
\mu_p(\mathcal{A}) = \sum_{A\in\mathcal{A}} p^{|A|}(1-p)^{n-|A|},
$$
which assigns to each subset a weight proportional to its size. In this setting, stars again play the role of extremal families in an appropriate parameter range. 
Friedgut \cite{F08} developed a spectral approach to study intersecting families under this measure, showing that for $0<p\le \frac{1}{t+1}$, every $t$-intersecting family $\mathcal{A} \subseteq 2^n$ satisfies $\mu_p(\mathcal{A})\le p^t$, with equality characterized by $t$-stars when $p<\frac{1}{t+1}$. His proof introduces a pseudo-adjacency matrix of the disjointness graph whose eigenvalues can be explicitly computed.
A natural extension of the $p$-biased setting is to allow different probabilities for different elements of $[n]$. Let $\mathbf{p}=(p_1,\dots,p_n)$ with $p_i\in(0,1)$ for all $i\in[n]$. The \textit{$\mathbf{p}$-biased measure} is defined analogously by
$$
\mu_{\mathbf{p}}(\mathcal{A}) = \sum_{A\in\mathcal{A}} \prod_{i\in A} p_i \prod_{j\in [n]\setminus A}(1-p_j).
$$
This non-uniform version was introduced by Fishburn, Frankl, Freed, Lagarias and Odlyzko \cite{F86}, who proved that if $p_1 \ge p_2 \ge \cdots \ge p_n$ and $p_2\le \frac{1}{2}$, then every intersecting family $\mathcal{A} \subseteq 2^{[n]}$ satisfies $\mu_{\mathbf{p}}(\mathcal{A})\le p_1$, with equality only for the star centered at $1$ when $p_1$ is strictly maximal. Their result thus extends the classical EKR theorem to the non-uniform weighted setting, showing that stars remain extremal under a mild condition on the second-largest probability.
Suda,  Tanaka and Tokushige \cite{S17} used the semidefinite programming  to extend this result to cross intersecting setting for two families with non-uniform probabilities.
They also proposed the following conjecture for a single intersecting family, which refines the earlier result by relaxing the condition $p_2\le \frac{1}{2}$ to $p_3\le \frac{1}{2}$.

 \begin{conjecture}[Suda-Tanaka-Tokushige \cite{S17}]\label{S17}
Let $1 > p_1 \ge p_2 \ge \cdots \ge p_n > 0$ and $\mathbf{p}=(p_1,p_2,\ldots,p_n)$. 
Assume that  $p_3 \leq \frac{1}{2}$. 
If $\mathcal{A} \subseteq 2^{[n]}$ is intersecting, then
$$
\mu_{\mathbf{p}}(\mathcal{A}) \le p_1 .
$$
Moreover, if $p_1 > p_3$, or $p_1 < \tfrac12$, then equality holds if and only if $\mathcal{A}$ is a  star centered at some $i \in [n]$ with $p_1 = p_i$.
\end{conjecture}

Tokushige \cite{T22} proved that this conjecture holds when $p_1\leq \frac{1}{2}$ or $1-p_2>p_3$, using the Filmus–Golubev–Lifshitz high-dimensional Hoffman bound \cite{F21}. However, the full conjecture remains open.
In this paper, we prove  a natural generalization of the Suda–Tanaka–Tokushige conjecture to $t$-intersecting families. Our first main result is the following theorem.

 \begin{theorem}\label{Th2}
Let $1 > p_1 \ge p_2 \ge \cdots \ge p_n > 0$ and $\textbf{p}=(p_1,p_2,\ldots,p_n)$. 
For $t\geq 1$, assume that $p_{t+2} \leq \frac{1}{t+1}$. 
If $\mathcal{A} \subseteq 2^{[n]}$ is $t$-intersecting, then
$$
\mu_{\mathbf{p}}(\mathcal{A}) \le \prod_{i=1}^t p_i.
$$
Moreover, when $p_{t+2}  < \frac{1}{t+1}$,  equality holds if and only if $\mathcal{A}=\{A\subseteq [n]: T\subseteq A\}$ for some $T\in \binom{[n]}{t}$ with $\prod_{i\in T} p_i=\prod_{i=1}^t p_i$. 
\end{theorem}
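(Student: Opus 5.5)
We argue by induction on $n$ using shifting/coupling, reducing to Friedgut's uniform $p$-biased theorem on the coordinates beyond the $(t+1)$-st. First we may assume $\mathcal A$ is an up-set, since replacing $\mathcal A$ by its up-closure keeps it $t$-intersecting and does not decrease $\mu_{\mathbf p}$.

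The main device is a monotonicity (``lowering'') step. For $i\ge t+2$ we have $p_i\le p_{t+2}\le\frac1{t+1}$. The claim is that for an up-set $\mathcal A$, lowering some $p_i$ decreases $\mu_{\mathbf p}(\mathcal A)$; indeed $\partial\mu_{\mathbf p}(\mathcal A)/\partial p_i=\mu(\mathcal A_i^+)-\mu(\mathcal A_i^-)\ge0$, where $\mathcal A_i^{\pm}$ are the links. This monotonicity runs in the wrong direction for a reduction, so instead we raise the small coordinates: set $q=p_{t+2}$ and replace every $p_i$ with $i\ge t+2$ by $q$. This only increases $\mu(\mathcal A)$. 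It therefore suffices to prove
\[
\mu_{(p_1,\dots,p_{t+1},q,\dots,q)}(\mathcal A)\le\prod_{i=1}^t p_i
\]
with $q\le\frac1{t+1}$ and $p_1\ge\dots\ge p_{t+1}\ge q$.

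Next we split on the first $t+1$ coordinates. For $S\subseteq[t+1]$, let $\mathcal A_S=\{B\subseteq[t+2,n]: S\cup B\in\mathcal A\}$. Then $\mathcal A_S$ and $\mathcal A_{S'}$ are cross-$(t-|S\cap S'|)$-intersecting, and
\[
\mu(\mathcal A)=\sum_S w_S\,\mu_q(\mathcal A_S),\qquad w_S=\prod_{i\in S}p_i\prod_{i\in[t+1]\setminus S}(1-p_i).
\]
The cross-$s$-intersecting version of Friedgut's bound (Friedgut's spectral method, or its consequence via Tokushige) gives $\sqrt{\mu_q(\mathcal F)\mu_q(\mathcal G)}\le q^s$ for $q\le\frac1{s+1}$. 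Writing $x_S=\mu_q(\mathcal A_S)\in[0,1]$, with $x_S$ monotone in $S$, we must maximize the linear form $\sum_S w_Sx_S$ under the constraints $x_Sx_{S'}\le q^{2(t-|S\cap S'|)}$, which includes $x_S\le q^{t-|S|}$ for $|S|\le t$. The target $\prod_{i\le t}p_i$ corresponds to $x_S=1$ for $S\supseteq[t]$ and $0$ otherwise. We verify this by a weighting argument: pair each $S$ with its complement-type partner and use AM--GM, i.e.\ $w_Sx_S+w_{S'}x_{S'}\le\max(\cdots)$, together with $q\le p_{t+1}$.

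The main obstacle is this finite optimization over $2^{t+1}$ variables with product constraints. It is exactly where $p_{t+1}$, which can be large, must be handled; that is why only $p_{t+2}$ is constrained. We would first try Friedgut-style linear combinations, pairing $S$ with $[t+1]\setminus S$ plus common parts. If the optimization is not tight, the fallback is to run Friedgut's pseudo-adjacency spectral argument directly on the product space with nonuniform $p_1,\dots,p_{t+1}$, choosing on the first $t+1$ coordinates a tailored matrix whose Hoffman ratio bound equals $\prod_{i\le t}p_i$.

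For uniqueness when $q<\frac1{t+1}$, we trace equality through the argument. Strict monotonicity in the raising step forces the links to coincide. Friedgut's equality case then forces each nonzero $\mathcal A_S$ to be a star or everything, and the optimization forces the support to be $\{S\supseteq T\}$ with $T$ a $t$-set of maximal product, so $\mathcal A$ is the $t$-star on $T$.
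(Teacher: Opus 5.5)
Your preliminary reductions are fine: passing to the up-closure, and raising $p_i$ for $i\ge t+2$ to $q=p_{t+2}$, which cannot decrease the measure of an up-set. But the decisive step is false as formulated. The finite program you set up does not have optimum $\prod_{i\le t}p_i$, so no pairing or AM--GM argument can verify it.

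Already for $t=1$ and $p_1=p_2=p$, consider the point $x_{[2]}=1$, $x_{\{1\}}=x_{\{2\}}=q$, $x_\emptyset=q^2$. It is monotone and satisfies every constraint you list: $x_\emptyset\le q$, $x_\emptyset x_{\{i\}}=q^3\le q^2$, $x_{\{1\}}x_{\{2\}}=q^2$, $x_\emptyset x_{[2]}=q^2$, and all other pairs meet in at least $t$ points. Yet its objective value is $p^2+2pq(1-p)+q^2(1-p)^2=(p+q(1-p))^2$, which exceeds $p$. For instance, with $p=0.64$ and $q=0.45<\frac12$ it equals $0.802^2\approx 0.6432>0.64$.

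What is lost is that the pairwise bound $\mu_q(\mathcal F)\mu_q(\mathcal G)\le q^{2s}$ carries almost no information when the two links have very unequal measures. Here $\mathcal A_{[2]}=2^{[3,n]}$ contains $\emptyset$, which forces $\mathcal A_\emptyset=\emptyset$, not merely $\mu_q(\mathcal A_\emptyset)\le q^2$. Repairing this would require the full unbalanced trade-off for cross-$s$-intersecting pairs. Your fallback, a pseudo-adjacency matrix on the nonuniform first $t+1$ coordinates with Hoffman ratio exactly $\prod_{i\le t}p_i$, is essentially the open problem itself; the spectral route via the Filmus--Golubev--Lifshitz bound gave only Tokushige's partial results for $t=1$. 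Also, the cross-$s$-intersecting product bound for $s\ge 2$ is not a consequence of Friedgut's theorem but a separate, harder result that you would have to cite precisely. Since your equality analysis is traced through this step, it is not established either.

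The paper avoids links entirely. It shifts, which preserves $t$-intersection and does not decrease $\mu_{\mathbf p}$ since $p_i\ge p_j$ for $i<j$, passes to the up-set, and inducts on the extent $\ell$ of the generating family $\mathcal G$. For $\ell=t$ the family lies in the star at $[t]$. For $\ell=t+1$, shiftedness forces $\mathcal G=\{[t+1]\}$.

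For $\ell\ge t+2$, shiftedness shows that two boundary generators meeting in exactly $t$ points have sizes summing to $t+\ell$ and union $[\ell]$ (Lemma~\ref{le3}). So for each pair of sizes $a\ne b$ with $a+b=t+\ell$, one deletes $\ell$ from the generators of one layer and discards the other layer. For the middle layer $a=\frac{t+\ell}{2}$, one deletes $\ell$ from those generators avoiding a suitably chosen $i\in[\ell-1]$ and discards the rest. Lemma~\ref{le3} is exactly what keeps the new generating family $t$-intersecting.

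Lemma~\ref{le2} computes the change of measure exactly. An averaging argument then shows a non-decreasing choice exists provided $p_\ell\le\frac12$, respectively $p_\ell\le\frac{\ell-t}{2(\ell-1)}$, and both follow from $p_\ell\le p_{t+2}\le\frac1{t+1}$. This is why $p_1,\dots,p_{t+1}$ may be large in the paper's argument: they never enter the exchange inequalities. In your decomposition they sit in the weights $w_S$ and must be beaten by product constraints that are too weak to do so.
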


The bound $p_{t+2}\le \frac{1}{t+1}$ in Theorem \ref{Th2} is best possible: if $p_{t+2}> \frac{1}{t+1}$, then the measure of a non-star $t$-intersecting family can already exceed that of the $t$-star. Indeed, when $p_1=\cdots=p_{t+2}=p$ with $\frac{1}{t+1}<p<1$, the triangular family
$
\mathcal{A}=\{A\subseteq[n]: |A\cap[t+2]|\ge t+1\}
$
satisfies $\mu_{\mathbf{p}}(\mathcal{A})=(t+2)p^{t+1}(1-p)+p^{t+2}>p^t$. 
On the other hand,
at the boundary $p_{t+2}=\frac{1}{t+1}$, the equality is attained by the same family when $p_1=\cdots=p_{t+1}=p$ and $p_{t+2}=1/(t+1)$, in which case $\mu_{\mathbf{p}}(\mathcal{A})=p^t=\prod_{i=1}^t p_i$ without $\mathcal{F}$ being a star. This explains why the equality statement in Theorem \ref{Th2} requires the strict condition $p_{t+2}<\frac{1}{t+1}$.

When $t=1$, Theorem \ref{Th2} reduces to the following result, which confirms Conjecture \ref{S17}.

\begin{corollary}\label{cr1}
Let $1 > p_1 \ge p_2 \ge \cdots \ge p_n > 0$ and $\textbf{p}=(p_1,p_2,\ldots,p_n)$. 
Assume that $p_3 \leq \frac{1}{2}$. 
If $\mathcal{A} \subseteq 2^{[n]}$ is intersecting, then
$$
\mu_{\mathbf{p}}(\mathcal{A}) \le p_1 .
$$
Moreover, if $p_3  < \frac{1}{2}$, then equality holds if and only if $\mathcal{A}$ is a  star centered at some $i \in [n]$ with $p_1 = p_i$.
\end{corollary}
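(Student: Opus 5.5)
Corollary \ref{cr1} is the case $t=1$ of Theorem \ref{Th2}, so I will only check that the hypotheses and conclusions line up. Put $t=1$ in Theorem \ref{Th2}. The condition $p_{t+2}\le \frac{1}{t+1}$ becomes $p_3\le \frac12$, which is exactly the hypothesis of Corollary \ref{cr1}. A $1$-intersecting family is just an intersecting family. The bound $\prod_{i=1}^t p_i$ becomes $p_1$. This already gives $\mu_{\mathbf p}(\mathcal A)\le p_1$ for every intersecting $\mathcal A\subseteq 2^{[n]}$.

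For the equality statement, assume $p_3<\frac12$. This is the strict condition $p_{t+2}<\frac{1}{t+1}$ of Theorem \ref{Th2} with $t=1$. The theorem then says equality holds if and only if $\mathcal A=\{A\subseteq[n]: T\subseteq A\}$ for some $T\in\binom{[n]}{1}$ with $\prod_{i\in T}p_i=p_1$. Writing $T=\{i\}$, the family is the star centered at $i$, and the product condition reads $p_i=p_1$. This is exactly the characterization in Corollary \ref{cr1}.

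I expect no real obstacle here. The only point to note concerns the original Conjecture \ref{S17}, whose equality clause is stated under the condition "$p_1>p_3$ or $p_1<\frac12$". The corollary as stated uses only $p_3<\frac12$, and that follows directly from the theorem.
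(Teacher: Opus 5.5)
Your proposal is correct and is exactly the paper's argument: the corollary is obtained by setting $t=1$ in Theorem \ref{Th2}. The hypothesis $p_{t+2}\le\frac{1}{t+1}$ becomes $p_3\le\frac12$, and the equality clause under $p_3<\frac12$ reads off as stars centered at some $i$ with $p_i=p_1$. Your side remark is also accurate: the corollary's equality condition $p_3<\frac12$ is more restrictive than the conjecture's ``$p_1>p_3$ or $p_1<\frac12$'', so the case $p_3=\frac12<p_1$ is not covered, and there the triangle family $\{A:|A\cap[3]|\ge 2\}$ in fact attains equality when $p_1=p_2$.
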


When $ p_1 = \cdots = p_n = p$, Theorem  \ref{Th2} gives the following uniform version, which recovers Friedgut's  theorem.

\begin{corollary}\cite{F08}\label{cr2}
Let $0<p\leq \frac{1}{t+1}$. 
If $\mathcal{A} \subseteq 2^{[n]}$ is $t$-intersecting, then
$$
\mu_p(\mathcal{A}) \le p^t .
$$
Moreover, if $p  <  \frac{1}{t+1}$, then equality holds if and only if $\mathcal{A}$ is a $t$-star.
\end{corollary}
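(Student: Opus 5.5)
The statement immediately above is Corollary~\ref{cr2}, which is the special case $p_1=\cdots=p_n=p$ of Theorem~\ref{Th2}. I would prove it by specializing Theorem~\ref{Th2} and checking that its hypotheses and conclusions match.

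First, with $\mathbf{p}=(p,\dots,p)$, the measure $\mu_{\mathbf{p}}$ is exactly the $p$-biased measure $\mu_p$. This is because $\prod_{i\in A}p\prod_{j\notin A}(1-p)=p^{|A|}(1-p)^{n-|A|}$.

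Next, the monotonicity hypothesis $1>p_1\ge\cdots\ge p_n>0$ holds trivially, since $0<p\le\frac{1}{t+1}<1$.

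There is one technical point about the size of $n$, because Theorem~\ref{Th2} refers to $p_{t+2}$. If $n\ge t+2$, then the condition $p_{t+2}=p\le \frac1{t+1}$ is exactly our hypothesis. If $n<t+2$, I would embed $\mathcal{A}$ into $2^{[t+2]}$ by setting $\mathcal{A}'=\{A\cup B: A\in\mathcal{A},\ B\subseteq[n+1,t+2]\}$. This family $\mathcal{A}'$ is still $t$-intersecting, since $|(A\cup B)\cap(A'\cup B')|\ge|A\cap A'|\ge t$. Its measure is preserved, $\mu_p(\mathcal{A}')=\mu_p(\mathcal{A})$, because the extra coordinates contribute a factor summing to $1$.

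Theorem~\ref{Th2} then gives $\mu_p(\mathcal{A})\le\prod_{i=1}^t p=p^t$.

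For the equality case with $p<\frac1{t+1}$, the condition $\prod_{i\in T}p_i=p^t$ holds for every $T\in\binom{[n]}{t}$. So Theorem~\ref{Th2} says that equality occurs exactly when $\mathcal{A}$ is a $t$-star. In the padded case, $\mathcal{A}'$ is a $t$-star $\{A\subseteq[t+2]: T\subseteq A\}$. Since $\mathcal{A}'$ contains, along with each $A\cup B$, also $A\cup B'$ for every $B'\subseteq[n+1,t+2]$, the set $T$ must lie in $[n]$. Restricting to $[n]$ then shows $\mathcal{A}=\{A\subseteq[n]:T\subseteq A\}$.

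Conversely, every $t$-star has measure $p^t$ and is $t$-intersecting. The only mildly delicate step is this small-$n$ padding; everything else is direct substitution.
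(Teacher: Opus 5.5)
Your proposal is correct and follows the paper's route exactly: the paper obtains this corollary simply by setting $p_1=\cdots=p_n=p$ in Theorem~\ref{Th2}. In that case every $T\in\binom{[n]}{t}$ satisfies the product condition, so the equality case reduces to $t$-stars. Your padding of $\mathcal{A}$ into $2^{[t+2]}$ when $n<t+2$ is a valid extra detail the paper leaves implicit; there, for $n\le t+1$ the extent is at most $t+1$, so only Cases 1 and 2 of the proof arise and the hypothesis on $p_{t+2}$ is never used.
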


The proof of Theorem  \ref{Th2} is based on the shifting operator together with the generating  set method, which are introduced  in the following sections.
The remainder of this  paper is organized as follows. Section \ref{se2} introduces notation and preliminary tools. Section \ref{se3} presents the proof of Theorem  \ref{Th2}. Section \ref{se4} contains concluding remarks and open problems.

\section{Preliminaries}\label{se2}

\subsection{Notation}

Throughout this paper, we use the following notation. For a family $\mathcal{A}\subseteq 2^{[n]}$ and an element $x\in[n]$, we define
\begin{align*}
&\mathcal{A}^{(k)}=\{A\in\mathcal{A}: |A|=k\},~~\mathcal{A} [x]=\{A\in \mathcal{A}: x\in A\},\\
&\mathcal{A} (x)=\{A\backslash\{x\}: x\in A\in \mathcal{A}\},~~\mathcal{A}(\bar{x})=\{A\in\mathcal{A}: x\notin A\}.
\end{align*}
Thus, $\mathcal{A}[x]^{(k)}$ denotes the subfamily of $\mathcal{A}$ consisting of $k$-sets containing $x$, and $\mathcal{A}[x]^{(k)}(x)$ is the collection obtained by removing $x$ from each member of $\mathcal{A}[x]^{(k)}$.

A family $\mathcal{A}\subseteq 2^{[n]}$ is called an \textit{antichain} if there is no distinct $A, A'\in\mathcal{A}$ satisfying $A\subseteq A'$.
A family $\mathcal{A}\subseteq 2^{[n]}$ is called \textit{monotone} if $A\in\mathcal{A}$ and $A\subseteq B$ imply $B\in\mathcal{A}$. 
The \textit{up-set} of $\mathcal{A}$ is defined as
$$
\langle\mathcal{A}\rangle=\{F\subseteq[n]: A\subseteq F \text{ for some } A\in\mathcal{A}\}.
$$

\subsection{Shifting}

Shifting is a classical and powerful technique in extremal set theory. For a family $\mathcal{A}\subseteq 2^{[n]}$ and integers $1\le i<j\le n$, the  \textit{shifting operator} $s_{i,j}$ is defined by
$$
s_{i,j}(\mathcal{A})=\{s_{i,j}(A): A\in\mathcal{A}\},
$$
where
$$
s_{i,j}(A)=
\begin{cases}
(A\setminus\{j\})\cup\{i\}, & \text{if } j\in A,\ i\notin A,\ \text{and }(A\setminus\{j\})\cup\{i\}\notin\mathcal{A},\\
A, & \text{otherwise}.
\end{cases}
$$
A family is called \textit{shifted} if $s_{i,j}(\mathcal{A})=\mathcal{A}$ for all $1\le i<j\le n$. A well known fact, which follows by repeatedly applying the shifting operators, is that every family can be transformed into a shifted one.
The following key properties of shifting will be used.

\begin{lemma}\cite{G23}\label{G23}
If $\mathcal{A}\subseteq 2^{[n]}$ is $t$-intersecting, then $s_{i,j}(\mathcal{A})$ remains  $t$-intersecting for every $1\le i<j\le n$.
\end{lemma}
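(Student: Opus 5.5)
We argue directly from the definition of $s_{i,j}$: take two members of $s_{i,j}(\mathcal{A})$, write them as $s_{i,j}(A)$ and $s_{i,j}(B)$ with $A,B\in\mathcal{A}$, and show $|s_{i,j}(A)\cap s_{i,j}(B)|\ge t$. If neither set moved, this is just the hypothesis $|A\cap B|\ge t$. If both moved, then each lost $j$ and gained $i$. The element $j$ was in $A\cap B$ and is now replaced by $i$ in both sets, so the size of the intersection is unchanged.

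The only real case is when exactly one set moves, say $A'=s_{i,j}(A)=(A\setminus\{j\})\cup\{i\}\neq A$ while $B$ stays in place. Here $j\in A$, $i\notin A$ and $A'\notin\mathcal{A}$. The intersection can only drop if $j\in B$ and $i\notin B$; otherwise $|A'\cap B|\ge|A\cap B|\ge t$. So assume $j\in B$ and $i\notin B$. Then $B$ is itself eligible to move, since $j\in B$ and $i\notin B$. As $B$ did not move, we must have $B'=(B\setminus\{j\})\cup\{i\}\in\mathcal{A}$.

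Now apply the $t$-intersecting hypothesis to the pair $A,B'\in\mathcal{A}$, giving $|A\cap B'|\ge t$. Note that $i\notin A$ and $j\notin B'$, so $A\cap B'=(A\cap B)\setminus\{j\}$. We also have $A'\cap B=(A\cap B)\setminus\{j\}$, because $i\notin B$. Hence $|A'\cap B|=|A\cap B'|\ge t$.

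One bookkeeping point needs checking: the two given sets may be the same image, or the images may coincide for different preimages. Since $s_{i,j}$ is injective on $\mathcal{A}$, the case analysis over preimage pairs covers every pair of members, including a member paired with itself. The main obstacle is this mixed case, and the key step is realising that the fixed set $B$ stays put precisely because its shift $B'$ already lies in $\mathcal{A}$, which lets us invoke the hypothesis on $A$ and $B'$.
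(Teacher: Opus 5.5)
Your proof is correct: it is the standard case analysis for this classical fact, and the key step is the one you identify. An unmoved $B$ with $j\in B$, $i\notin B$ must have its shift $B'$ already in $\mathcal{A}$, and then $A'\cap B=A\cap B'$.

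The paper gives no proof of its own to compare against; it simply quotes the lemma from \cite{G23}.

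The injectivity remark is harmless but unnecessary. Every member of $s_{i,j}(\mathcal{A})$ is the image of some member of $\mathcal{A}$, and your case analysis already runs over all pairs $(A,B)\in\mathcal{A}^2$, including $A=B$.
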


\begin{lemma}\label{le1}
Let $1 > p_1 \ge p_2 \ge \cdots \ge p_n > 0$ and $\textbf{p}=(p_1,p_2,\ldots,p_n)$.  Let $\mathcal{A} \subseteq 2^{[n]}$ and let $1\leq i<j\leq n$.
Then 
$ \mu_\textbf{p}(\mathcal{A})\le \mu_\textbf{p}(s_{ij}(\mathcal{A}))$.
\end{lemma}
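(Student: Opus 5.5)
We compare the two families set by set, pairing each set with its partner obtained by swapping $i$ and $j$. Call sets $A$ with $|A\cap\{i,j\}|\neq 1$ \emph{neutral}; for these $s_{i,j}(A)=A$. For $R\subseteq[n]\setminus\{i,j\}$, write
\[
A_R=R\cup\{j\}, \qquad A'_R=R\cup\{i\}.
\]
The map $A\mapsto s_{i,j}(A)$ is a bijection from $\mathcal{A}$ onto $s_{i,j}(\mathcal{A})$. This is standard: a set is moved only when its image is not already in $\mathcal{A}$, and distinct moved sets $R\cup\{j\}$ have distinct images $R\cup\{i\}$. It therefore suffices to compare weights within each pair $\{A_R,A'_R\}$, since neutral sets contribute identically to both measures.

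Fix $R$ and split into cases according to which of $A_R, A'_R$ lie in $\mathcal{A}$.
\begin{itemize}
\item If both lie in $\mathcal{A}$, or neither does, the pair is unchanged by the shift.
\item If only $A'_R\in\mathcal{A}$, then $A'_R$ stays fixed, since it contains $i$.
\item If only $A_R\in\mathcal{A}$, then it is replaced by $A'_R$.
\end{itemize}
So the only change in measure comes from the last case, and equals $\mu_{\mathbf p}(A'_R)-\mu_{\mathbf p}(A_R)$.

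Writing $w(R)=\prod_{k\in R}p_k\prod_{k\notin R\cup\{i,j\}}(1-p_k)$, we have
\[
\mu_{\mathbf p}(A'_R)-\mu_{\mathbf p}(A_R)=w(R)\bigl(p_i(1-p_j)-p_j(1-p_i)\bigr)=w(R)(p_i-p_j)\ge 0,
\]
because $i<j$ gives $p_i\ge p_j$. Summing over all $R$ yields $\mu_{\mathbf p}(s_{i,j}(\mathcal A))-\mu_{\mathbf p}(\mathcal A)=\sum_{R} w(R)(p_i-p_j)\ge 0$, where the sum runs over those $R$ with $A_R\in\mathcal A$ and $A'_R\notin\mathcal A$.

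There is no real obstacle here. The only point needing care is the bookkeeping showing that the shift is injective and changes only the sets in the last case, which is exactly what the case split above records.
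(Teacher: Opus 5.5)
Your proof is correct and is essentially the paper's argument: compare each set with its image under $s_{i,j}$, and observe that a moved set $R\cup\{j\}\mapsto R\cup\{i\}$ does not lose weight because $p_i\ge p_j$. The paper writes this as the ratio $\frac{p_i(1-p_j)}{p_j(1-p_i)}\ge 1$ and uses injectivity of the shift only implicitly when it sums, whereas you compute the difference $w(R)(p_i-p_j)$ and justify injectivity explicitly.
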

\begin{proof}
For each $A \in \mathcal{A}$, we compare $\mu_{\mathbf{p}}(s_{ij}(A))$ with $\mu_{\mathbf{p}}(A)$.
If $s_{ij}(A)=A$, then clearly $\mu_{\mathbf{p}}(s_{ij}(A))=\mu_{\mathbf{p}}(A)$.
Otherwise, we have $j \in A$, $i \notin A$ and
$
s_{ij}(A) = (A\setminus\{j\}) \cup \{i\}.
$
Since $1>p_i \ge p_j > 0$, it follows that
$$
\frac{\mu_{\mathbf{p}}(s_{ij}(A))}{\mu_{\mathbf{p}}(A)}
= \frac{p_i(1-p_j)}{p_j(1-p_i)}\geq 1.
$$
 Hence, $\mu_{\mathbf{p}}(s_{ij}(A)) \ge \mu_{\mathbf{p}}(A)$. Summing over all $A \in \mathcal{A}$ yields
$
\mu_{\mathbf{p}}(s_{ij}(\mathcal{A}))
= \sum_{A \in \mathcal{A}} \mu_{\mathbf{p}}(s_{ij}(A))
\ge \sum_{A \in \mathcal{A}} \mu_{\mathbf{p}}(A)
= \mu_{\mathbf{p}}(\mathcal{A})
$.
\end{proof}

\subsection{Generating set}

We now introduce the generating set method, which originated from the work of Ahlswede and Khachatrian \cite{A97} and has since then become a key tool in the study of $t$-intersecting families; see also \cite{F17, L25}.
Let $\mathcal{A}$ be a monotone family. A \textit{generating set} of $\mathcal{A}$ is an inclusion-minimal member of $\mathcal{A}$. The collection of all generating sets is called the \textit{generating family} of $\mathcal{A}$, denoted by $\mathcal{G}$. Since no generating set contains another, $\mathcal{G}$ is an antichain.
The \textit{extent} of $\mathcal{A}$, denoted by $\ell$, is the largest integer appearing in any generating set. The \textit{boundary generating family} $\mathcal{G}[\ell]$ consists of all generating sets that contain $\ell$.

The following lemma is the main technical tool of the paper. It gives a precise formula for how the $\mathbf{p}$-biased measure changes when we replace, at the boundary layer, some generating sets containing $\ell$ by their deletions of $\ell$.

\begin{lemma}\label{le2}
Let $1 > p_1 \ge p_2 \ge \cdots \ge p_n > 0$ and $\mathbf{p} = (p_1, \dots, p_n)$.
Let $\mathcal{A}\subseteq 2^{[n]}$ be a monotone shifted family with extent $\ell\geq 2$, generating family $\mathcal{G}$ and boundary generating family $\mathcal{G}[\ell]$.  For any $\mathcal{B},\mathcal{C}\subseteq \mathcal{G}[\ell]$, define $\mathcal{D}=(\mathcal{G}\setminus(\mathcal{B}\cup\mathcal{C}) 
)\cup\{C\setminus\{\ell\}: C\in\mathcal{C}\}$.  Then
$$
\mu_{\mathbf{p}}(\langle \mathcal{D} \rangle) - \mu_{\mathbf{p}}(\mathcal{A})
=
\frac{1}{\prod_{k=\ell+1}^n (1-p_k)} \left(
\frac{1-p_\ell}{p_\ell}  \mu_{\mathbf{p}}( \mathcal{C}) - \mu_{\mathbf{p}}( \mathcal{B}\backslash \mathcal{C} )
\right).
$$
\end{lemma}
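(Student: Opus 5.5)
The plan is to reduce everything to the ground set $[\ell]$ and to track exactly which sets enter or leave the family when $\mathcal{G}$ is replaced by $\mathcal{D}$.

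\textbf{Reduction to $[\ell]$.} Every generating set of $\mathcal{A}$ lies in $[\ell]$, and so does every member of $\mathcal{D}$. Hence membership of $F$ in $\mathcal{A}$ or in $\langle\mathcal{D}\rangle$ depends only on $F\cap[\ell]$. Write $\mathcal{A}=\{F: F\cap[\ell]\in\mathcal{A}'\}$ and $\langle\mathcal{D}\rangle=\{F: F\cap[\ell]\in\mathcal{D}'\}$ with $\mathcal{A}',\mathcal{D}'\subseteq 2^{[\ell]}$. Let $\mu'$ be the $(p_1,\dots,p_\ell)$-measure on $2^{[\ell]}$. Summing over the free coordinates in $[\ell+1,n]$ gives $\mu_{\mathbf{p}}(\mathcal{A})=\mu'(\mathcal{A}')$ and likewise for $\mathcal{D}$. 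For a single set $G\subseteq[\ell]$, viewed as a member of $2^{[n]}$, we have $\mu'(\{G\})=\mu_{\mathbf{p}}(\{G\})/\prod_{k=\ell+1}^n(1-p_k)$. This accounts for the prefactor in the statement. It therefore suffices to prove the set identity
$$\mathcal{D}'=\bigl(\mathcal{A}'\setminus(\mathcal{B}\cup\mathcal{C})\bigr)\cup\mathcal{C}\cup\{C\setminus\{\ell\}:C\in\mathcal{C}\}=\bigl(\mathcal{A}'\setminus(\mathcal{B}\setminus\mathcal{C})\bigr)\cup\{C\setminus\{\ell\}:C\in\mathcal{C}\},$$
with the last union disjoint from $\mathcal{A}'$. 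Then
$$\mu'(\mathcal{D}')-\mu'(\mathcal{A}')=\sum_{C\in\mathcal{C}}\mu'(C\setminus\{\ell\})-\mu'(\mathcal{B}\setminus\mathcal{C}),$$
and $\mu'(C\setminus\{\ell\})=\frac{1-p_\ell}{p_\ell}\mu'(C)$ gives the formula.

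\textbf{Key step: shiftedness.} For $G\in\mathcal{G}[\ell]$ and $x\in[\ell-1]\setminus G$, shiftedness (via $s_{x,\ell}$) gives $(G\setminus\{\ell\})\cup\{x\}\in\mathcal{A}$. That set contains some generator $H$ with $\ell\notin H$, so $H\in\mathcal{G}\setminus\mathcal{G}[\ell]\subseteq\mathcal{D}$.

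\textbf{Consequences inside $2^{[\ell]}$.}
\begin{enumerate}[(i)]
\item Take $F\in\mathcal{A}'$ with $F\notin\mathcal{B}\cup\mathcal{C}$. If $F$ contains a generator outside $\mathcal{B}\cup\mathcal{C}$, then $F\in\mathcal{D}'$ directly. Otherwise $F\supsetneq G$ for some $G\in\mathcal{B}\cup\mathcal{C}$, so $F$ contains some $x\in[\ell-1]\setminus G$. By the key step $F\supseteq H\in\mathcal{D}$, so again $F\in\mathcal{D}'$.
\item Each $G\in\mathcal{B}\cup\mathcal{C}$ contains no other generator, since $\mathcal{G}$ is an antichain. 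So $G\in\mathcal{D}'$ exactly when $G$ contains some $C\setminus\{\ell\}$ with $C\in\mathcal{C}$. This forces $G=C\in\mathcal{C}$: the inclusion $C\setminus\{\ell\}\subseteq G$ gives $C\subseteq G$ because $\ell\in G$, and then antichain-ness gives $C=G$.
\item A new set $F\supseteq C\setminus\{\ell\}$ with $F\notin\mathcal{A}'$ must equal $C\setminus\{\ell\}$. Indeed $F=C$ is already in $\mathcal{A}'$, and any other such $F$ contains $x\in[\ell-1]\setminus C$, hence lies in $\mathcal{A}'$ by the key step.
\item $C\setminus\{\ell\}\notin\mathcal{A}'$ by the minimality of $C$, and the map $C\mapsto C\setminus\{\ell\}$ is injective on $\mathcal{G}[\ell]$.
\end{enumerate}
Together these give the set identity above.

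\textbf{Main obstacle.} The delicate part is the bookkeeping in steps (i)--(iii). One must check that removing a boundary generator $G$ deletes only $G$ itself from $2^{[\ell]}$, and not every superset of $G$. One must also check that sets in $\mathcal{B}\cap\mathcal{C}$ are correctly counted as retained. Both points rest on the key step, namely that shiftedness supplies a surviving generator avoiding $\ell$. I would verify this case analysis carefully, in particular the cases $x\notin G$ with $x<\ell$ versus the coordinates above $\ell$, which are absorbed by the reduction.
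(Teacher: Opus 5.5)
Your proof is correct and follows essentially the same route as the paper. The paper identifies $\langle\mathcal{D}\rangle\setminus\mathcal{A}$ and $\mathcal{A}\setminus\langle\mathcal{D}\rangle$ as the families $(C\setminus\{\ell\})\cup T$ and $B\cup T$ with $T\subseteq[\ell+1,n]$, using exactly your ingredients (shifting via $s_{x,\ell}$ gives a surviving generator avoiding $\ell$, plus the antichain and minimality properties of $\mathcal{G}$), then sums out the tail coordinates; your projection onto $2^{[\ell]}$ is just a repackaging of that tail summation.
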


\begin{proof}
We begin with a structural description of the two disjoint parts of the symmetric difference between $\langle\mathcal{D}\rangle$ and $\mathcal{A}$. 

\begin{claim}\label{cl1}
Under the hypotheses of the lemma, we have
\begin{align*}
\langle\mathcal{D}\rangle\setminus\mathcal{A}=&\left\{(C\setminus\{\ell\})\cup T: C\in\mathcal{C},\ T\in\binom{[\ell+1,n]}{\leq n-|C|+1}\right\},\\
\mathcal{A}\setminus\langle\mathcal{D}\rangle=&\left\{B\cup T: B\in\mathcal{B}\backslash \mathcal{C},\ T\in\binom{[\ell+1,n]}{\leq n-|B|}\right\}.
 \end{align*}
\end{claim}
\begin{proof}[Proof of Claim \ref{cl1}]
We first prove the description of $\langle\mathcal{D}\rangle\setminus\mathcal{A}$.
For any $F\in\langle\mathcal{D}\rangle\setminus\mathcal{A}$, there exists some $C\in\mathcal{C}$ such that $C\setminus\{\ell\}\subseteq F$.  Since $C\subseteq [\ell]$, we have $C\setminus\{\ell\}\subseteq F\cap[\ell]$.
Moreover, $F\notin\mathcal{A}$  forces  $\ell \notin F\cap[\ell]$; otherwise $C\subseteq F\cap[\ell]$ would imply $F\in\mathcal{A}$, a contradiction.
If $C\setminus\{\ell\}\subsetneq F\cap[\ell]$, then there exists $i\in(F\cap[\ell])\setminus(C\setminus\{\ell\})$ with $i<\ell$. By shiftedness, $(C\setminus\{\ell\})\cup\{i\}\in\mathcal{A}$. Since 
$\mathcal{A}= \langle\mathcal{G}\rangle$, there exists $G\in\mathcal{G}$ such that $G\subseteq(C\setminus\{\ell\})\cup\{i\}\subseteq F\cap[\ell]$, which implies  $F\in\mathcal{A}$, a contradiction. 
Hence, $F\cap[\ell]=C\setminus\{\ell\}$. This implies that
$\langle\mathcal{D}\rangle\setminus\mathcal{A}\subseteq\left\{(C\setminus\{\ell\})\cup T: C\in\mathcal{C},\ T\in\binom{[\ell+1,n]}{\leq n-|C|+1}\right\}.$  
For the reverse, take any  $C\in\mathcal{C}$ and $ T\in\binom{[\ell+1,n]}{\leq n-|C|+1}$.
Since no subset of $C\setminus\{\ell\}$ belongs to $\mathcal{G}$,
 we have $(C\setminus\{\ell\})\cup T \notin \mathcal{A}$. Clearly, $(C\setminus\{\ell\})\cup T \in \langle\mathcal{D}\rangle$. 
Therefore, 
$\left\{(C\setminus\{\ell\})\cup T: C\in\mathcal{C},\ T\in\binom{[\ell+1,n]}{\leq n-|C|+1}\right\} \subseteq\langle\mathcal{D}\rangle\setminus\mathcal{A}$.

We now prove the  description of $\mathcal{A}\setminus\langle\mathcal{D}\rangle$.
For any $A\in\mathcal{A}\setminus\langle\mathcal{D}\rangle$, there exists some $B\in\mathcal{B}\backslash \mathcal{C}$ such that $B\subseteq A$. Then $B\subseteq A\cap[\ell]$.
Observe that $\ell\in B$. 
If $B\subsetneq A\cap[\ell]$, then there exists $i\in(A\cap[\ell])\setminus B$ with $i<\ell$.
By shiftedness, $(B\setminus\{\ell\})\cup\{i\}\in\mathcal{A}$. Since 
$\mathcal{A}= \langle\mathcal{G}\rangle$, there exists $G\in\mathcal{G}$ such that $G\subseteq(B\setminus\{\ell\})\cup\{i\}\subseteq A\cap[\ell]$.
In view of $\ell \notin G$, we have $G\in\mathcal{G}\setminus(\mathcal{B}\cup\mathcal{C})$.
Then $A\in\langle\mathcal{D}\rangle$, a contradiction. Hence, we have $A\cap[\ell]=B$, which implies 
$\mathcal{A}\setminus\langle\mathcal{D}\rangle\subseteq\left\{B\cup T: B\in\mathcal{B}\backslash \mathcal{C},\ T\in\binom{[\ell+1,n]}{\leq n-|B|}\right\}$.
For the reverse inclusion, take any  $B\in\mathcal{B}\backslash \mathcal{C}$ and $ T\in\binom{[\ell+1,n]}{\leq n-|B|}$. Note that no subset of $B$ belongs to $\mathcal{G}\setminus(\mathcal{B}\cup  \mathcal{C})$.
If  $B\cup T \in \langle\mathcal{D}\rangle$, then there exists some  
$C\in\mathcal{C}$ such that $C\setminus\{\ell\}\subseteq B$.
Since $\ell\in B$ and $B\in\mathcal{B}\backslash \mathcal{C}$, we  have $C\subsetneq B$. 
But $\mathcal{B}\cup\mathcal{C}$ is an antichain because $\mathcal{G}$ is a  generating family, a contradiction.
Hence, $B\cup T \notin \langle\mathcal{D}\rangle$, while clearly $B\cup T \in \mathcal{A}$. Consequently,
$\left\{B\cup T: B\in\mathcal{B}\backslash \mathcal{C},\ T\in\binom{[\ell+1,n]}{\leq n-|B|}\right\}\subseteq\mathcal{A}\setminus\langle\mathcal{D}\rangle$.
\end{proof}
Equipped with Claim \ref{cl1}, since $B, C \subseteq [\ell]$, we have   $n-|B| \ge n-\ell$ and $n-|C|+1 \ge n-\ell+1 > n-\ell$. Since $|[\ell+1,n]| = n-\ell$, the cardinality restrictions on $T$ in Claim \ref{cl1} are automatically satisfied for every $T \subseteq [\ell+1,n]$. Consequently, 
\begin{align}
\langle \mathcal{D} \rangle \setminus \mathcal{A} 
&= 
\left\{ (C \setminus \{\ell\}) \cup T : C \in \mathcal{C},\ T \subseteq [\ell+1,n] \right\}, \label{f1} \\
\mathcal{A} \setminus \langle \mathcal{D} \rangle 
&= 
\left\{ B \cup T : B \in \mathcal{B}\backslash \mathcal{C},\ T \subseteq [\ell+1,n] \right\}. \label{f2}
\end{align}
For any $S \subseteq [\ell]$, we have
\begin{align*}
\sum_{T \subseteq [\ell+1,n]} \mu_{\mathbf{p}}(S \cup T)
&=
\mu_{\mathbf{p}}^{[\ell]}(S) 
\sum_{T \subseteq [\ell+1,n]} 
\prod_{i \in T} p_i \prod_{j \in [\ell+1,n] \setminus T} (1-p_j) \\
&=
\mu_{\mathbf{p}}^{[\ell]}(S) 
\prod_{k=\ell+1}^n \big( p_k + (1-p_k) \big) =
\mu_{\mathbf{p}}^{[\ell]}(S),
\end{align*}
where $\mu_{\mathbf{p}}^{[\ell]}(S) := \prod_{i \in S} p_i \prod_{j \in [\ell]\setminus S} (1-p_j)$ denotes the restriction of $\mu_{\mathbf{p}}$ on $[\ell]$. Since $\mu_{\mathbf{p}}(S) = \mu_{\mathbf{p}}^{[\ell]}(S) \prod_{k=\ell+1}^n (1-p_k)$, we obtain the tail summation formula
\begin{equation}
\sum_{T \subseteq [\ell+1,n]} \mu_{\mathbf{p}}(S \cup T)
=
\frac{\mu_{\mathbf{p}}(S)}{\prod_{k=\ell+1}^n (1-p_k)}. \label{f3}
\end{equation}

For any $C \in \mathcal{C}$ and  $T \subseteq [\ell+1,n]$, we have
$
\mu_{\mathbf{p}}((C \setminus \{\ell\}) \cup T)
=
\frac{1-p_\ell}{p_\ell} \cdot \mu_{\mathbf{p}}(C \cup T). 
$
Combining this with (\ref{f1}), we obtain
\begin{align*}
\mu_{\mathbf{p}}(\langle \mathcal{D} \rangle \setminus \mathcal{A})
&=
\sum_{C \in \mathcal{C}} \sum_{T \subseteq [\ell+1,n]} 
\mu_{\mathbf{p}}((C \setminus \{\ell\}) \cup T) \\
&=
\frac{1-p_\ell}{p_\ell} 
\sum_{C \in \mathcal{C}} \sum_{T \subseteq [\ell+1,n]} 
\mu_{\mathbf{p}}(C \cup T).
\end{align*}
Applying (\ref{f3}) with $S = C$ yields
\begin{align*}
\mu_{\mathbf{p}}(\langle \mathcal{D} \rangle \setminus \mathcal{A})
&=
\frac{1-p_\ell}{p_\ell}
\sum_{C \in \mathcal{C}} 
\frac{\mu_{\mathbf{p}}(C)}{\prod_{k=\ell+1}^n (1-p_k)} \notag\\
&=
\frac{1-p_\ell}{p_\ell}
\frac{\mu_{\mathbf{p}}( \mathcal{C} )}{\prod_{k=\ell+1}^n (1-p_k)}.
\end{align*}
Similarly, using   (\ref{f2}) and (\ref{f3})  with $S = B$, we obtain
\begin{align*}
\mu_{\mathbf{p}}(\mathcal{A} \setminus \langle \mathcal{D} \rangle)
&=
\sum_{B \in \mathcal{B}\backslash \mathcal{C}} \sum_{T \subseteq [\ell+1,n]} 
\mu_{\mathbf{p}}(B \cup T) \\
&=
\sum_{B \in \mathcal{B}\backslash \mathcal{C}} 
\frac{\mu_{\mathbf{p}}(B)}{\prod_{k=\ell+1}^n (1-p_k)} \\
&=
\frac{\mu_{\mathbf{p}}(\mathcal{B}\backslash \mathcal{C} )}{\prod_{k=\ell+1}^n (1-p_k)}.
\end{align*}
It follows from 
$
\mu_{\mathbf{p}}(\langle \mathcal{D} \rangle) - \mu_{\mathbf{p}}(\mathcal{A})
=
\mu_{\mathbf{p}}(\langle \mathcal{D} \rangle \setminus \mathcal{A}) - \mu_{\mathbf{p}}(\mathcal{A} \setminus \langle \mathcal{D} \rangle)
$
that
$$
\mu_{\mathbf{p}}(\langle \mathcal{D} \rangle) - \mu_{\mathbf{p}}(\mathcal{A})
=
\frac{1}{\prod_{k=\ell+1}^n (1-p_k)} \left(
\frac{1-p_\ell}{p_\ell}  \mu_{\mathbf{p}}( \mathcal{C}) - \mu_{\mathbf{p}}( \mathcal{B}\backslash \mathcal{C})
\right).
$$
This completes the proof. 
\end{proof}

The next lemma records a basic structural property of boundary generating sets in a shifted $t$-intersecting family.

\begin{lemma}\label{le3}
 Let $\mathcal{A}\subseteq 2^{[n]}$ be  a monotone shifted $t$-intersecting family with  generating family  $\mathcal{G}$ and extent $\ell$. If 
$A, B\in \mathcal{G}[\ell]$ satisfy $|A\cap B|=t$, then $A\cup B=[\ell]$ and $|A|+|B|=t+\ell$.
\end{lemma}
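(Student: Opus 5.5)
The plan is to argue by contradiction with a single shifting step that would lower the intersection size below $t$. Note first that every generating set of $\mathcal{A}$ lies in $[\ell]$ by the definition of extent, so $A,B\subseteq[\ell]$ and $\ell\in A\cap B$. The target is to show $A\cup B=[\ell]$. The identity $|A|+|B|=|A\cup B|+|A\cap B|=\ell+t$ then follows immediately.

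Suppose $A\cup B\neq[\ell]$, and pick $i\in[\ell]\setminus(A\cup B)$. Since $\ell\in A$, we have $i<\ell$. Set $A'=(A\setminus\{\ell\})\cup\{i\}$. Because $\mathcal{A}$ is shifted and $A\in\mathcal{A}$ with $\ell\in A$, $i\notin A$, $i<\ell$, the set $A'$ lies in $\mathcal{A}$. Indeed, $s_{i,\ell}(\mathcal{A})=\mathcal{A}$ means that either $A$ was moved to $A'$, which is impossible since $A\in\mathcal{A}$ is fixed, or $A'$ was already in $\mathcal{A}$. More carefully, shiftedness forces $A'\in\mathcal{A}$, otherwise $s_{i,\ell}$ would replace $A$ by $A'\notin\mathcal{A}$ and change the family. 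This is the same use of shiftedness already made in the proof of Claim \ref{cl1}.

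Now compute $A'\cap B$. We remove $\ell$, which lies in $A\cap B$, and add $i$, which is not in $B$. Hence $|A'\cap B|=|A\cap B|-1=t-1$. This contradicts the $t$-intersecting property, since $A',B\in\mathcal{A}$. Therefore $A\cup B=[\ell]$.

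There is no serious obstacle. The only point needing care is justifying $A'\in\mathcal{A}$ from shiftedness as defined via $s_{i,j}$. Monotonicity is not even needed there, though it is harmless. Note also that $A\neq B$ is not required: if $A=B$, the argument still shows $A=[\ell]$ and $2|A|=t+\ell$.
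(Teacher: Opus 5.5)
Your proposal is correct and follows essentially the same argument as the paper: pick $i\in[\ell]\setminus(A\cup B)$, use shiftedness to get $(A\setminus\{\ell\})\cup\{i\}\in\mathcal{A}$, observe that its intersection with $B$ has size $t-1$, and conclude $A\cup B=[\ell]$ and hence $|A|+|B|=t+\ell$. Your extra justification of why shiftedness forces this set into $\mathcal{A}$ is a correct detail that the paper leaves implicit.
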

\begin{proof}
Since $A, B\in \mathcal{G}[\ell]$, we have $A\cup B\subseteq[\ell]$ and $\ell\in A\cap B$. If $A\cup B\subsetneq[\ell]$, then there exists  $x\in[\ell]\setminus(A\cup B)$ with $x<\ell$. By shiftedness,  $(A\setminus\{\ell\})\cup\{x\}\in\mathcal{A}$ 
and $|((A\setminus\{\ell\})\cup\{x\})\cap B|=|(A\cap B)\setminus\{\ell\}|=t-1$, contradicting the fact that $\mathcal{A}$ is $t$-intersecting. Therefore, $A\cup B=[\ell]$ and $|A|+|B|=|A\cap B|+|A\cup B|=t+\ell$.
\end{proof}

Lemma \ref{le3} shows that if $A,B\in\mathcal{G}[\ell]$ satisfy $|A\cap B|=t$, then $A\cup B=[\ell]$ and $|A|+|B|=\ell+t$. This structural fact will be used in the constructions below. Depending on whether the boundary layers occur in two distinct sizes or only one, we obtain two different ways to reduce the extent of the family while preserving the $t$-intersecting property and controlling the measure.

\begin{lemma}\label{pr1}
Let $1 > p_1 \ge p_2 \ge \cdots \ge p_n > 0$ and $\mathbf{p} = (p_1, \dots, p_n)$.
Suppose that $\mathcal{A}\subseteq 2^{[n]}$ is  a monotone shifted $t$-intersecting family with  generating family  $\mathcal{G}$ and extent $\ell \geq 2$. Let $a\neq b$ be positive integers such that $a+b=t+\ell$ and $\mathcal{G}[\ell]^{(a)},\mathcal{G}[\ell]^{(b)}$ are not both empty. Define 
\begin{align*}
\mathcal{G}_1=\left(\mathcal{G}\setminus\left(\mathcal{G}[\ell]^{(a)}\cup\mathcal{G}[\ell]^{(b)}\right)\right)\cup\left\{H\setminus\{\ell\}: H\in\mathcal{G}[\ell]^{(a)}\right\},\\
\mathcal{G}_2=\left(\mathcal{G}\setminus\left(\mathcal{G}[\ell]^{(a)}\cup\mathcal{G}[\ell]^{(b)}\right)\right)\cup\left\{H\setminus\{\ell\}: H\in\mathcal{G}[\ell]^{(b)}\right\}.
\end{align*}
Let 
$\mathcal{A}_1=\langle\mathcal{G}_1\rangle$ and  $\mathcal{A}_2=\langle\mathcal{G}_2\rangle$.
Then both $\mathcal{A}_1$ and $\mathcal{A}_2$ are $t$-intersecting. Moreover, if $p_{\ell}\leq \frac{1}{2}$, then
$$
\max\{\mu_{\mathbf{p}}(\mathcal{A}_1),\mu_{\mathbf{p}}(\mathcal{A}_2)\}\geq \mu_{\mathbf{p}}(\mathcal{A}),
$$
with strict inequality  whenever $p_{\ell}<\frac{1}{2}$.
\end{lemma}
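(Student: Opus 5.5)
We split the argument into the $t$-intersecting property and the measure comparison, the latter coming directly from Lemma \ref{le2}.

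\textbf{Step 1: $\mathcal{A}_1$ is $t$-intersecting.} By symmetry between $a$ and $b$ it suffices to treat $\mathcal{A}_1$. Since $\mathcal{A}_1=\langle\mathcal{G}_1\rangle$, it is enough to check $|G\cap G'|\ge t$ for all $G,G'\in\mathcal{G}_1$.

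Pairs with both members in $\mathcal{G}\setminus(\mathcal{G}[\ell]^{(a)}\cup\mathcal{G}[\ell]^{(b)})$ are fine, since $\mathcal{G}\subseteq\mathcal{A}$.

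Now let $G=H\setminus\{\ell\}$ with $H\in\mathcal{G}[\ell]^{(a)}$, and let $G'\in\mathcal{G}$ be an untouched generator.
\begin{itemize}
\item If $\ell\notin G'$, then $|G\cap G'|=|H\cap G'|\ge t$.
\item If $\ell\in G'$, then $G'\in\mathcal{G}[\ell]$ with $|G'|\ne a,b$. Suppose $|G\cap G'|<t$. Then $|H\cap G'|=t$, because $|H\cap G'|\ge t$ and removing $\ell$ drops it by exactly one. Lemma \ref{le3} then gives $|H|+|G'|=t+\ell$, so $|G'|=b$, a contradiction.
\end{itemize}

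Finally let $G=H\setminus\{\ell\}$ and $G'=H'\setminus\{\ell\}$ with $H,H'\in\mathcal{G}[\ell]^{(a)}$. If $|H\cap H'|=t$, Lemma \ref{le3} gives $2a=t+\ell=a+b$, so $a=b$, a contradiction. Hence $|H\cap H'|\ge t+1$, and therefore $|G\cap G'|\ge t$. This is exactly where $a\ne b$ is used.

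\textbf{Step 2: the measure comparison.} Apply Lemma \ref{le2} twice, writing $P=\prod_{k=\ell+1}^n(1-p_k)$.
\begin{itemize}
\item With $\mathcal{B}=\mathcal{G}[\ell]^{(b)}$ and $\mathcal{C}=\mathcal{G}[\ell]^{(a)}$, we get $\mathcal{D}=\mathcal{G}_1$.
\item With $\mathcal{B}=\mathcal{G}[\ell]^{(a)}$ and $\mathcal{C}=\mathcal{G}[\ell]^{(b)}$, we get $\mathcal{D}=\mathcal{G}_2$.
\end{itemize}
Since $a\ne b$, the two layers are disjoint, so $\mathcal{B}\setminus\mathcal{C}=\mathcal{B}$. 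Set $x=\mu_{\mathbf p}(\mathcal{G}[\ell]^{(a)})$, $y=\mu_{\mathbf p}(\mathcal{G}[\ell]^{(b)})$ and $r=\frac{1-p_\ell}{p_\ell}$. Then
\[
P\bigl(\mu_{\mathbf p}(\mathcal{A}_1)-\mu_{\mathbf p}(\mathcal{A})\bigr)=rx-y,\qquad P\bigl(\mu_{\mathbf p}(\mathcal{A}_2)-\mu_{\mathbf p}(\mathcal{A})\bigr)=ry-x.
\]
Adding these gives $(r-1)(x+y)$. If $p_\ell\le\frac12$ then $r\ge 1$, so the sum is nonnegative and at least one difference is $\ge 0$.

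For strictness, suppose $p_\ell<\frac12$, so $r>1$. Since $x+y>0$ (the layers are not both empty and every set has positive measure), the sum is strictly positive. Hence at least one difference is strictly positive, which gives $\max\{\mu_{\mathbf p}(\mathcal{A}_1),\mu_{\mathbf p}(\mathcal{A}_2)\}>\mu_{\mathbf p}(\mathcal{A})$.

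\textbf{Where the difficulty lies.} The main obstacle is Step 1, specifically the pairs of generators where exactly one or both have had $\ell$ removed. Lemma \ref{le3} forces the size relation $|H|+|G'|=t+\ell$ in the bad cases, and the choice $a+b=t+\ell$ with $a\ne b$ is precisely what rules those cases out. Step 2 is a routine averaging once Lemma \ref{le2} is in hand.
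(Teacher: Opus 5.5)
Your proposal is correct and follows essentially the same route as the paper. You check the $t$-intersecting property pairwise on $\mathcal{G}_1$, using Lemma \ref{le3} together with $a\neq b$ to rule out the two bad configurations $|H\cap H'|=t$ and $|H\cap G'|=t$. You then obtain the measure comparison by applying Lemma \ref{le2} twice and adding the results to get $(r-1)(x+y)$. Your explicit remarks that $\mathcal{B}\setminus\mathcal{C}=\mathcal{B}$ (the layers are disjoint) and that $x+y>0$ (needed for strictness) simply state outright what the paper leaves implicit.
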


\begin{proof}
We first prove the $t$-intersecting property. By symmetry, it suffices to prove that $\mathcal{A}_1$ is $t$-intersecting. Since $\mathcal{A}_1=\langle\mathcal{G}_1\rangle$, it suffices to verify that $\mathcal{G}_1$ is $t$-intersecting.  Suppose, otherwise, that there exist $A,B\in\mathcal{G}_1$ such that $|A\cap B|\le t-1$. Since $\mathcal{G}$ is $t$-intersecting, at least one of  $A, B$ lies outside $\mathcal{G}$. Without loss of generality, assume $B\notin\mathcal{G}$. Then $B=C\setminus\{\ell\}$ for some $C\in\mathcal{G}[\ell]^{(a)}$.

If $\ell\notin A$, then $A\cap B=A\cap C$. Hence, $|A\cap C|\le t-1$.  Since  $\mathcal{G}$ is $t$-intersecting and $C\in\mathcal{G}$, it follows that $A\notin \mathcal{G}$. Thus, $A=D\setminus\{\ell\}$ for some $D\in\mathcal{G}[\ell]^{(a)}$. Since $2a\neq t+\ell$, Lemma \ref{le3} gives $|D\cap C|\geq t+1$. Consequently, $|A\cap B|=|D\cap C|-1\geq t$, a contradiction.

If $\ell\in A$, then $A\in\mathcal{G}[\ell]\setminus\left(\mathcal{G}[\ell]^{(a)}\cup\mathcal{G}[\ell]^{(b)}\right)$.  By Lemma \ref{le3}, we have $|A\cap C|\ge t+1$. But $|A\cap C|=|A\cap B|+1\le t$, again a contradiction.
Therefore, $\mathcal{G}_1$ is $t$-intersecting. 

It remains to prove the  measure inequality. By Lemma \ref{le2}, we have
\begin{align*}
\mu_{\mathbf{p}}(\mathcal{A}_1)-\mu_{\mathbf{p}}(\mathcal{A})
=&
\frac{1}{\prod_{k=\ell+1}^n(1-p_k)}
\left(
\frac{1-p_{\ell}}{p_{\ell}}\mu_{\mathbf{p}}\left(\mathcal{G}[\ell]^{(a)}\right)
-\mu_{\mathbf{p}}\left(\mathcal{G}[\ell]^{(b)}\right)
\right),\\
\mu_{\mathbf{p}}(\mathcal{A}_2)-\mu_{\mathbf{p}}(\mathcal{A})
=&
\frac{1}{\prod_{k=\ell+1}^n(1-p_k)}
\left(
\frac{1-p_{\ell}}{p_{\ell}}\mu_{\mathbf{p}}\left(\mathcal{G}[\ell]^{(b)}\right)
-\mu_{\mathbf{p}}\left(\mathcal{G}[\ell]^{(a)}\right)
\right).
\end{align*}
Adding these two inequalities yields
$$
\mu_{\mathbf{p}}(\mathcal{A}_1)+\mu_{\mathbf{p}}(\mathcal{A}_2)-2\mu_{\mathbf{p}}(\mathcal{A})
=
\frac{1}{\prod_{k=\ell+1}^n(1-p_k)}
\left(
\frac{1-p_{\ell}}{p_{\ell}}-1
\right)
\left(\mu_{\mathbf{p}}\left(\mathcal{G}[\ell]^{(a)}\right)+\mu_{\mathbf{p}}\left(\mathcal{G}[\ell]^{(b)}\right)
\right).
$$
Since $\frac{1-p_{\ell}}{p_{\ell}}-1=\frac{1-2p_{\ell}}{p_{\ell}}$,
the desired inequality follows immediately.
\end{proof}

\begin{lemma}\label{pr2}
Let $1 > p_1 \ge p_2 \ge \cdots \ge p_n > 0$ and $\mathbf{p} = (p_1, \dots, p_n)$.
Let $\mathcal{A}\subseteq 2^{[n]}$ be a monotone shifted $t$-intersecting family with  generating family  $\mathcal{G}$ and extent $\ell \geq 2$. Suppose that $a=\frac{t+\ell}{2}$ is a positive integer and  $\mathcal{G}[\ell]^{(a)}\neq \emptyset$. 
For each $i\in [\ell-1]$, define $\mathcal{H}_{i}=\{H\in \mathcal{G}[\ell]^{(a)}: i\in H \}$ and
\begin{align*}
\mathcal{G}_i=\left(\mathcal{G}\setminus\mathcal{G}[\ell]^{(a)}\right)\cup\left\{H\setminus\{\ell\}: H\in\mathcal{G}[\ell]^{(a)}\setminus \mathcal{H}_{i}\right\}.
\end{align*}
Let 
$\mathcal{A}_i=\langle\mathcal{G}_i\rangle$.
Then  each $\mathcal{A}_i$  is $t$-intersecting. Moreover, if $p_{\ell}\leq \frac{\ell-t}{2(\ell-1)}$, then there exists some $i\in [\ell-1]$ such that
$$
\mu_{\mathbf{p}}(\mathcal{A}_i)\geq \mu_{\mathbf{p}}(\mathcal{A}),
$$
with strict inequality  whenever $p_{\ell}<\frac{\ell-t}{2(\ell-1)}$.
\end{lemma}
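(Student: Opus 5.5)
The argument has two parts: the $t$-intersecting property, by the same case analysis as in Lemma \ref{pr1}, and the measure inequality, by applying Lemma \ref{le2} to each $i$ and averaging over $i\in[\ell-1]$.

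\emph{The $t$-intersecting property.} Since $\mathcal{A}_i=\langle\mathcal{G}_i\rangle$, it suffices to show that $\mathcal{G}_i$ is $t$-intersecting. Suppose $A,B\in\mathcal{G}_i$ with $|A\cap B|\le t-1$. Then at least one of them, say $B$, is new, so $B=C\setminus\{\ell\}$ with $C\in\mathcal{G}[\ell]^{(a)}\setminus\mathcal{H}_i$; in particular $i\notin C$.

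If $A\in\mathcal{G}$ and $\ell\notin A$, then $A\cap B=A\cap C$, which has size at least $t$, a contradiction.

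If $A\in\mathcal{G}$ and $\ell\in A$, then $A\in\mathcal{G}[\ell]$ and $|A|\neq a$. Lemma \ref{le3} forces $|A\cap C|\ge t+1$, since $|A|+|C|\ne t+\ell$. Hence $|A\cap B|=|A\cap C|-1\ge t$, a contradiction.

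If $A=D\setminus\{\ell\}$ with $D\in\mathcal{G}[\ell]^{(a)}\setminus\mathcal{H}_i$, then $|A\cap B|=|C\cap D|-1$. So we need $|C\cap D|\ge t+1$. Suppose instead $|C\cap D|=t$. Then Lemma \ref{le3} gives $C\cup D=[\ell]$, so $i\in C\cup D$, contradicting $i\notin C,D$. This exclusion is exactly why the sets containing $i$ are kept unshrunk.

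\emph{The measure inequality.} Apply Lemma \ref{le2} with $\mathcal{B}=\mathcal{G}[\ell]^{(a)}$ and $\mathcal{C}=\mathcal{G}[\ell]^{(a)}\setminus\mathcal{H}_i$, so that $\mathcal{B}\setminus\mathcal{C}=\mathcal{H}_i$. Writing $P=\prod_{k>\ell}(1-p_k)$, this gives
$$P\bigl(\mu_{\mathbf{p}}(\mathcal{A}_i)-\mu_{\mathbf{p}}(\mathcal{A})\bigr)=\frac{1-p_\ell}{p_\ell}\,\mu_{\mathbf{p}}\bigl(\mathcal{G}[\ell]^{(a)}\setminus\mathcal{H}_i\bigr)-\mu_{\mathbf{p}}(\mathcal{H}_i).$$

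Now sum over $i\in[\ell-1]$. Each $H\in\mathcal{G}[\ell]^{(a)}$ contains exactly $a-1$ elements of $[\ell-1]$, so it lies in $a-1$ of the families $\mathcal{H}_i$ and outside $\ell-a$ of them. Writing $M=\mu_{\mathbf{p}}(\mathcal{G}[\ell]^{(a)})>0$, the sum equals
$$\Bigl(\frac{1-p_\ell}{p_\ell}(\ell-a)-(a-1)\Bigr)M.$$

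This is nonnegative iff $(1-p_\ell)(\ell-a)\ge p_\ell(a-1)$, i.e.\ $p_\ell\le \frac{\ell-a}{\ell-1}$. Substituting $a=\frac{t+\ell}{2}$ gives $\frac{\ell-a}{\ell-1}=\frac{\ell-t}{2(\ell-1)}$, which is exactly the hypothesis. When $p_\ell$ is strictly below this value the sum is strictly positive, since $M>0$ and the bracket is positive.

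By averaging, some index $i$ has $\mu_{\mathbf{p}}(\mathcal{A}_i)-\mu_{\mathbf{p}}(\mathcal{A})\ge 0$, with strict inequality in the strict case.

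The only delicate point is the counting $|H\cap[\ell-1]|=a-1$, which holds because $H\subseteq[\ell]$ and $\ell\in H$. The main subtlety, rather than an obstacle, is the third case of the intersection argument, where the role of $i$ is essential.
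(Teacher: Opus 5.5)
Your proof is correct and follows the paper's argument essentially verbatim. The $t$-intersecting property uses the same case analysis via Lemma~\ref{le3}. For the measure, you apply Lemma~\ref{le2} with $\mathcal{B}=\mathcal{G}[\ell]^{(a)}$ and $\mathcal{C}=\mathcal{G}[\ell]^{(a)}\setminus\mathcal{H}_i$, then average over $i\in[\ell-1]$ using the $(a-1)$-fold count.

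One side remark needs correcting: the sets in $\mathcal{H}_i$ are not "kept unshrunk" but are deleted from the generating family altogether. Your own argument actually relies on this, both in the claim $|A|\neq a$ in the second case and in the $-\mu_{\mathbf{p}}(\mathcal{H}_i)$ term. Keeping those sets would in general break the $t$-intersecting property.
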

\begin{proof}
We first prove that $\mathcal{A}_i$  is $t$-intersecting. It suffices to verify that $\mathcal{G}_i$ is $t$-intersecting, since $\mathcal{A}_i = \langle \mathcal{G}_i \rangle$. Suppose, to the contrary, that there exist $A, B \in \mathcal{G}_i$ with $|A \cap B| \le t-1$. Since $\mathcal{G}$ is $t$-intersecting, at least one of $A, B$ must lie outside $\mathcal{G}$. By symmetry, we may assume $B \notin \mathcal{G}$. Then $B = C \setminus \{\ell\}$ for some $C \in \mathcal{G}[\ell]^{(a)}\setminus \mathcal{H}_{i}$.

If $\ell \notin A$, then $A \cap B = A \cap C$, which yields $|A \cap C| \le t-1$. Since $C \in \mathcal{G}$ and $\mathcal{G}$ is $t$-intersecting, we obtain $A \notin \mathcal{G}$. Hence, $A = D \setminus \{\ell\}$ for some $D \in \mathcal{G}[\ell]^{(a)}\setminus \mathcal{H}_{i}$. Since $i\notin C\cup D$, we have $C\cup D\neq [\ell]$. 
It follows from Lemma \ref{le3} that $|D \cap C| \geq t+1$. Consequently,
$
|A \cap B| = |D \cap C| - 1 \ge t,
$
which condraticts the assumption.

If $\ell \in A$, then $A \in \mathcal{G}[\ell] \setminus \mathcal{G}[\ell]^{(a)}$. By Lemma \ref{le3}, we have $|A \cap C| \geq t+1$. However,
$
|A \cap C| = |A \cap B| + 1 \le t,
$
again a contradiction.
Therefore, $\mathcal{G}_i$ is $t$-intersecting. 

It remains to establish the measure inequality. By Lemma \ref{le2}, we have
\begin{align*}
\mu_{\mathbf{p}}(\mathcal{A}_i)-\mu_{\mathbf{p}}(\mathcal{A})&=\frac{1}{\prod_{k=\ell+1}^n(1-p_k)}
\left(
\frac{1-p_{\ell}}{p_{\ell}}\mu_{\mathbf{p}}\left(\mathcal{G}[\ell]^{(a)}\setminus \mathcal{H}_{i}\right)
-\mu_{\mathbf{p}}\left(\mathcal{H}_i\right)
\right)\\
&=\frac{1}{\prod_{k=\ell+1}^n(1-p_k)}
\left(
\frac{1-p_{\ell}}{p_{\ell}}\mu_{\mathbf{p}}\left(\mathcal{G}[\ell]^{(a)}\right)
-\frac{1}{p_{\ell}}\mu_{\mathbf{p}}\left(\mathcal{H}_i\right)
\right).
\end{align*}
Averaging over all $i \in [\ell-1]$ gives
$$
\sum_{i=1}^{\ell-1} \mu_{\mathbf{p}}(\mathcal{A}_i) - \mu_{\mathbf{p}}(\mathcal{A})
=\frac{1}{\prod_{k=\ell+1}^n(1-p_k)}
\left(
\frac{1-p_{\ell}}{p_{\ell}}\mu_{\mathbf{p}}\left(\mathcal{G}[\ell]^{(a)}\right)
-\frac{1}{p_{\ell}(\ell-1)}\sum_{i=1}^{\ell-1}\mu_{\mathbf{p}}\left(\mathcal{H}_i\right)
\right).
$$
Observe that every $H \in \mathcal{G}[\ell]^{(a)}$ contains $\ell$ and has size $a$,  so it is counted exactly $a-1$ times in $\sum_{i=1}^{\ell-1}\mu_{\mathbf{p}}(\mathcal{H}_i)$. Therefore,
$$
\sum_{i=1}^{\ell-1}\mu_{\mathbf{p}}(\mathcal{H}_i)
=
(a-1)\mu_{\mathbf{p}}\left( \mathcal{G}[\ell]^{(a)} \right).
$$
Recall that $t=2a-\ell$. If $p_{\ell}\leq \frac{\ell-t}{2(\ell-1)}$, then $1-p_{\ell}\geq 1-\frac{\ell-t}{2(\ell-1)}=\frac{a-1}{\ell-1}$, and hence
\begin{align*}
\sum_{i=1}^{\ell-1} \mu_{\mathbf{p}}(\mathcal{A}_i) - \mu_{\mathbf{p}}(\mathcal{A})
=&\frac{1}{\prod_{k=\ell+1}^n(1-p_k)}
\left(
\frac{1-p_{\ell}}{p_{\ell}}\mu_{\mathbf{p}}\left(\mathcal{G}[\ell]^{(a)}\right)
-\frac{a-1}{p_{\ell}(\ell-1)}\mu_{\mathbf{p}}\left( \mathcal{G}[\ell]^{(a)} \right)
\right)\geq 0.
\end{align*}
Consequently, there exist at least one index $i\in[\ell-1]$ such that $
\mu_{\mathbf{p}}(\mathcal{A}_i)\geq \mu_{\mathbf{p}}(\mathcal{A})$, with strict inequality  whenever $p_{\ell}<\frac{\ell-t}{2(\ell-1)}$.
This completes the proof.
\end{proof}

\begin{prop}\label{pr3}
Let $1 > p_1 \ge p_2 \ge \cdots \ge p_n > 0$ and $\mathbf{p} = (p_1, \dots, p_n)$.
Let $\mathcal{A}\subseteq 2^{[n]}$ be a monotone shifted $t$-intersecting family with  generating family  $\mathcal{G}$ and extent $\ell \geq 2$. 
\begin{itemize}

\item[(1)] Suppose that $\ell+t$ is odd and  $p_{\ell}\leq\frac{1}{2}$. 
Then there exists a $t$-intersecting family $\mathcal{A}'$ whose extent is at most $\ell-1$ such that 
$$
\mu_{\mathbf{p}}(\mathcal{A}')\geq \mu_{\mathbf{p}}(\mathcal{A}),
$$
with strict inequality  whenever $p_{\ell}<\frac{1}{2}$.

\item[(2)] Suppose that $\ell+t$ is even and  $p_{\ell}\leq\frac{\ell-t}{2(\ell-1)}$. 
Then there exists a $t$-intersecting family $\mathcal{A}^*$ whose extent is at most $\ell-1$ such that 
$$
\mu_{\mathbf{p}}(\mathcal{A}^*)\geq \mu_{\mathbf{p}}(\mathcal{A}),
$$
with strict inequality  whenever $p_{\ell}<\frac{\ell-t}{2(\ell-1)}$.
\end{itemize}
\end{prop}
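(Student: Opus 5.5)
The plan is to apply Lemma~\ref{pr1} or Lemma~\ref{pr2} repeatedly. Each application removes some generating sets that contain $\ell$. We iterate until no generating set contains $\ell$, at which point the extent has dropped below $\ell$. Throughout, the measure never decreases, and the first step increases it strictly when the corresponding inequality on $p_\ell$ is strict.

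\emph{Part (1).} Since $\ell+t$ is odd, the sizes of members of $\mathcal{G}[\ell]$ split into pairs $\{a,b\}$ with $a\neq b$ and $a+b=t+\ell$. We pick one such pair for which $\mathcal{G}[\ell]^{(a)}\cup\mathcal{G}[\ell]^{(b)}\neq\emptyset$ and apply Lemma~\ref{pr1}. This yields a $t$-intersecting family $\mathcal{A}_1$ or $\mathcal{A}_2$ with measure at least $\mu_{\mathbf p}(\mathcal A)$. In its generating family, every set that contained $\ell$ and had size $a$ or $b$ has either been deleted or replaced by a set avoiding $\ell$. The rest of $\mathcal{G}$ is unchanged.

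The main obstacle is that Lemma~\ref{pr1} is stated for \emph{shifted} families whose generating family is exactly $\mathcal{G}$.
\begin{itemize}
\item The new family $\langle\mathcal{G}_1\rangle$ might not be shifted.
\item Its minimal elements may differ from $\mathcal{G}_1$: a set $H\setminus\{\ell\}$ could contain another member, or be contained in one.
\end{itemize}
We handle this as follows. Replace the new family by a shifted one using Lemmas~\ref{G23} and~\ref{le1}. Shifting $s_{i,j}$ with $i<j$ maps sets inside $[\ell-1]$ to sets inside $[\ell-1]$. Shifting also preserves monotonicity, after passing to the up-set, which only increases measure and keeps the $t$-intersecting property. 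Hence the shifted family is still generated by sets whose elements are at most $\ell$.

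To guarantee termination, we track the number of minimal generators containing $\ell$ as a potential. For the up-set of a family, each generator is contained in an original generator. So the new generators containing $\ell$ form a subfamily of the old ones, with the pair $\{a,b\}$ removed. The measure is unchanged if we instead view the whole process as a single step from Lemma~\ref{le2}.

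We iterate over all pairs $\{a,b\}$. Each round strictly decreases $|\mathcal{G}[\ell]|$, so eventually $\mathcal{G}[\ell]=\emptyset$ and the extent is at most $\ell-1$. Strictness follows from the first round when $p_\ell<\tfrac12$.

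\emph{Part (2).} Now $\ell+t$ is even, so besides the pairs $a\neq b$, which we treat exactly as in part (1), there is the middle size $a=\frac{t+\ell}{2}$. Note that $\frac{\ell-t}{2(\ell-1)}\le\frac12$, so the hypothesis of Lemma~\ref{pr1} holds as well. For the middle layer we apply Lemma~\ref{pr2}. It gives an index $i$ such that $\mathcal{A}_i$ is $t$-intersecting and satisfies $\mu_{\mathbf p}(\mathcal{A}_i)\ge\mu_{\mathbf p}(\mathcal{A})$. In passing to $\mathcal{A}_i$, all of $\mathcal{G}[\ell]^{(a)}$ is removed: the sets in $\mathcal{H}_i$ are deleted and the others lose $\ell$. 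After re-shifting as above, the count $|\mathcal{G}[\ell]|$ again strictly drops, so iteration terminates with extent at most $\ell-1$. Strictness is inherited from the first application when $p_\ell<\frac{\ell-t}{2(\ell-1)}$.

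The delicate point in both parts is confirming that re-shifting cannot create new generators containing $\ell$, and cannot push the extent back above $\ell$. This holds because shifts only move elements downward.
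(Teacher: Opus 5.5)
Your plan uses the right ingredients, but the iteration is not justified at the step you yourself call delicate. Termination needs re-shifting never to create new generators containing $\ell$, and your justification does not establish this. The sentence ``each generator of the up-set is contained in an original generator'' is about up-closure, not about shifting. ``Shifts only move elements downward'' does not prevent new minimal sets containing $\ell$. For example, let $\mathcal{F}=\langle\{\{4\},\{1,2\},\{1,3\}\}\rangle\subseteq 2^{[4]}$. Then $s_{1,4}(\mathcal{F})$ is monotone with minimal sets $\{1\},\{2,4\},\{3,4\}$. The generators containing $4$ go from $\{4\}$ alone to $\{2,4\},\{3,4\}$: there are more of them, and none is old.

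Nor can the re-shift be skipped after a middle-layer step. Take $t=1$, $\mathcal{A}=\{F:|F\cap[5]|\ge 3\}$ and $p_1=\cdots=p_5$. Lemma \ref{pr2} may return $i=1$, and then $\mathcal{A}_1=\langle\{\{2,3\},\{2,4\},\{3,4\}\}\rangle$ is not shifted. Had other layers of $\mathcal{G}[\ell]$ survived, the next use of Lemma \ref{pr1} would need that re-shift, because its measure formula rests on Claim \ref{cl1}, hence on shiftedness. So the strict decrease of $|\mathcal{G}[\ell]|$ per round is not established, and with it the existence of a family of extent at most $\ell-1$. Below the thresholds you could fall back on strict increase of $\mu_{\mathbf p}$ over the finitely many families, but you do not argue this. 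At $p_\ell=\frac12$, resp.\ $p_\ell=\frac{\ell-t}{2(\ell-1)}$, no such fallback exists.

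The paper avoids iteration altogether and works only with the original shifted $\mathcal{A}$ and $r=\frac{1-p_\ell}{p_\ell}$. From each complementary pair $\{a,b\}$ it keeps a layer with $r\mu_{\mathbf p}(\mathcal{G}[\ell]^{(a)})-\mu_{\mathbf p}(\mathcal{G}[\ell]^{(b)})\ge 0$. In the even case it also keeps the middle layer minus $\mathcal{H}_i$. It then shrinks all kept sets and deletes the rest of $\mathcal{G}[\ell]$ simultaneously. Lemma \ref{le3} gives the $t$-intersecting property directly: two shrunk sets either have sizes not summing to $t+\ell$, or both miss $i$ so their union is not $[\ell]$. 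A single application of Lemma \ref{le2} with $\mathcal{B}=\mathcal{G}[\ell]$ then writes the change in measure as a sum of the nonnegative per-pair terms and the middle-layer term. Your remark about viewing the process as one step of Lemma \ref{le2} points this way, but it needs that separate intersection check, since Lemma \ref{pr1} only covers modifying one pair while all other layers stay intact.

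Alternatively, you can save your iteration in two moves. First, verify that the output $\langle\mathcal{G}_1\rangle$ of Lemma \ref{pr1} is already shifted: by the shiftedness of $\mathcal{A}$, every shift of a member of $\mathcal{G}_1$ still contains a member of $\mathcal{G}_1$, so no re-shift is ever needed after a pair step. Second, perform the middle-layer step last.
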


\begin{proof}
 Since $\mathcal{G}[\ell]\neq \emptyset$, it follows that either $\mathcal{G}[\ell]^{(\frac{\ell+t}{2})}\neq\emptyset$, or
there exist distinct $a,b$ with $a+b=\ell+t$ such that $\mathcal{G}[\ell]^{(a)}$ and $\mathcal{G}[\ell]^{(b)}$  are not both empty.
We construct two subfamily $\mathcal{C}, \mathcal{D}\subseteq\mathcal{G}[\ell]$ as follows. 

Let $r=\frac{1-p_{\ell}}{p_{\ell}}$.
For any two distinct size $a, b$ with $a+b=\ell+t$, if $\mathcal{G}[\ell]^{(a)}$ and $\mathcal{G}[\ell]^{(b)}$ are not both empty, Lemma \ref{pr1} ensures that at least one of the two layers, say $\mathcal{G}[\ell]^{(a)}$, satisfies
$$
\mu_{\mathbf{p}}(\mathcal{A}_1)-\mu_{\mathbf{p}}(\mathcal{A})
=
\frac{1}{\prod_{k=\ell+1}^n(1-p_k)}
\left(
r\mu_{\mathbf{p}}\left(\mathcal{G}[\ell]^{(a)}\right)
-\mu_{\mathbf{p}}\left(\mathcal{G}[\ell]^{(b)}\right)
\right)\geq 0,
$$
with strict inequality  whenever $p_{\ell}<\frac{1}{2}$. We put $\mathcal{G}[\ell]^{(a)}$ into $\mathcal{C}$ and exclude $\mathcal{G}[\ell]^{(b)}$ from $\mathcal{C}$.

If $\ell+t$ is odd,  define
$$
\mathcal{G}'=(\mathcal{G}\setminus\mathcal{G}[\ell])\cup\{H\setminus\{\ell\}: H\in\mathcal{C}\},
$$
and set $\mathcal{A}'=\langle\mathcal{G}'\rangle$.
By Lemma \ref{le3}, $\mathcal{A}'$ is $t$-intersecting. By construction, $\mathcal{G}'$ contains no set containing $\ell$. Hence, the extent of $\mathcal{A}'$ is at most $\ell-1$.
Applying Lemma \ref{le2} with $\mathcal{B}=\mathcal{G}[\ell]$ and $\mathcal{C}=\mathcal{C}$ gives
\begin{align*}
\mu_{\mathbf{p}}(\mathcal{A}')-\mu_{\mathbf{p}}(\mathcal{A})
=
\frac{1}{\prod_{k=\ell+1}^n(1-p_k)}
\left(
r\mu_{\mathbf{p}}(\mathcal{C})
-
\mu_{\mathbf{p}}(\mathcal{G}[\ell]\setminus\mathcal{C})
\right)\geq 0,
\end{align*}
with strict inequality when $p_{\ell}<\frac{1}{2}$.

If $\ell+t$ is even, let $m=\frac{\ell+t}{2}$. If $\mathcal{G}[\ell]^{(m)}\neq \emptyset$,  then by Lemma \ref{pr2}, there exists some $i\in[\ell-1]$ for which, writing  $\mathcal{H}_i=\{H\in\mathcal{G}[\ell]^{(m)}: i\in H\}$,
\begin{align*}
\mu_{\mathbf{p}}(\mathcal{A}_i)-\mu_{\mathbf{p}}(\mathcal{A})=\frac{1}{\prod_{k=\ell+1}^n(1-p_k)}
\left(
r\mu_{\mathbf{p}}\left(\mathcal{G}[\ell]^{(m)}\setminus \mathcal{H}_{i}\right)
-\mu_{\mathbf{p}}\left(\mathcal{H}_i\right)
\right)\geq 0,
\end{align*}
with strict inequality  whenever $p_{\ell}<\frac{\ell-t}{2(\ell-1)}$.
Let $\mathcal{D}=\mathcal{C}\cup (\mathcal{G}[\ell]^{(m)}\setminus\mathcal{H}_i)$.
Now define
$$
\mathcal{G}^*=(\mathcal{G}\setminus\mathcal{G}[\ell])\cup\{H\setminus\{\ell\}: H\in\mathcal{D}\},
$$
and set $\mathcal{A}^*=\langle\mathcal{G}^*\rangle$.
By Lemma \ref{le3}, $\mathcal{A}^*$ is $t$-intersecting. Clearly, the extent of $\mathcal{A}^*$ is at most $\ell-1$.
Finally, Lemma \ref{le2} with $\mathcal{B}=\mathcal{G}[\ell]$ and $\mathcal{C}=\mathcal{D}$ gives
\begin{align*}
\mu_{\mathbf{p}}(\mathcal{A}^*)-\mu_{\mathbf{p}}(\mathcal{A})
=
\frac{1}{\prod_{k=\ell+1}^n(1-p_k)}
\left(
r\mu_{\mathbf{p}}(\mathcal{D})
-
\mu_{\mathbf{p}}(\mathcal{G}[\ell]\setminus\mathcal{D})
\right)\geq 0,
\end{align*}
with strict inequality when $p_{\ell}<\frac{\ell-t}{2(\ell-1)}$. 
\end{proof}

\section{Proof of Theorem \ref{Th2}}\label{se3}
Let $1 > p_1 \ge p_2 \ge \cdots \ge p_n > 0$ and $\textbf{p}=(p_1,p_2,\ldots,p_n)$. 
For $t\geq 1$, assume that $p_{t+2} \leq \frac{1}{t+1}$. 
Let $\mathcal{A} \subseteq 2^{[n]}$ be a $t$-intersecting family. 
By Lemmas \ref{G23} and \ref{le1}, repeated shifting operations transform $\mathcal{A}$ into a  shifted  $t$-intersecting family $\mathcal{A}'$ with $\mu_{\mathbf{p}}(\mathcal{A}')\ge \mu_{\mathbf{p}}(\mathcal{A})$. 
Hence, for the purpose of proving the upper bound, we may assume that $\mathcal{A}$  is  shifted. 
Since $\mu_{\mathbf{p}}(\langle\mathcal{A} \rangle) \geq \mu_{\mathbf{p}}(\mathcal{A})$ and $\langle \mathcal{A} \rangle$ is also $t$-intersecting, we may further assume that $\mathcal{A}$ is monotone.

 Let $\mathcal{G}$ be the generating family of $\mathcal{A}$  and let $\ell$ be its extent. Then $\mathcal{G}$
is $t$-intersecting and $\ell\geq t$.
We distinguish three cases according to the value of $\ell$.

\medskip
\textbf{Case 1: $\ell=t$.} 
Every set in $\mathcal{A}$ contains $[t]$. Then  $\mathcal{A}$ is contained in the $t$-star centered at $[t]$. Hence, $\mu_{\mathbf{p}}(\mathcal{A})\le \prod_{i=1}^t p_i$, with equality only when $\mathcal{A}$ is precisely that star.

\medskip
\textbf{Case 2: $\ell=t+1$.} 
By the definition of the boundary generating family, every set in $\mathcal{G}[t+1]$ contains $t+1$. If $\mathcal{G}[t+1]^{(t)}\neq \emptyset$, then, since  $\mathcal{A}$  is  shifted,  both  $[t]$ and $ [t-1]\cup \{t+1\}$ belong to $\mathcal{G}$, contradicting the fact that $\mathcal{G}$
is $t$-intersecting. As $\mathcal{G}$ is an antichain, the only remaining  possibility is $\mathcal{G}=\{[t+1]\}$. Consequently,  $\mu_{\mathbf{p}}(\mathcal{A})=\mu_{\mathbf{p}}(\langle \{[t+1]\} \rangle)= \prod_{i=1}^{t+1} p_i< \prod_{i=1}^t p_i$, since $p_{t+1}<1$.

\medskip
\textbf{Case 3: $\ell\ge t+2$.} 
If $\ell+t$ is odd, then applying Propsition \ref{pr3} (1)  yields a new $t$-intersecting family $\mathcal{A}'\subseteq 2^{[n]}$ satisfying
$\mu_{\mathbf{p}}(\mathcal{A}')\ge \mu_{\mathbf{p}}(\mathcal{A})$,
with  strict inequality  whenever $p_{\ell}<\frac{1}{2}$. Moreover, the extent of $\mathcal{A}'$  is strictly smaller  than  that of $\mathcal{A}$. Hence, by the induction hypothesis on the extent, we have $\mu_\textbf{p}(\mathcal{A}') \leq\prod_{i=1}^t p_i$. Consequently, $
\mu_\textbf{p}(\mathcal{A}) \le \prod_{i=1}^t p_i.
$
If $p_{t+2}  < \frac{1}{t+1}$, then $p_{\ell}<\frac{1}{2}$,
and therefore $
\mu_\textbf{p}(\mathcal{A}) < \prod_{i=1}^t p_i.
$

If $\ell+t$ is even, write $\ell=t+2r$. Then  $\frac{\ell-t}{2(\ell-1)}=\frac{r}{t+2r-1}$. Since  $\ell\ge t+2$, we have $r\geq 1$.
In view of
$
\frac{r}{t+2r-1}-\frac{1}{t+1}=\frac{(r-1)(t-1)}{(t+2r-1)(t+1)}\geq 0,
$
we have  $p_\ell\le p_{t+2} \leq \frac{1}{t+1}\leq \frac{\ell-t}{2(\ell-1)}$.
Applying Propsition \ref{pr3} (2), 
we obtain a new $t$-intersecting family $\mathcal{A}^*\subseteq 2^{[n]}$ satisfying
$\mu_{\mathbf{p}}(\mathcal{A}^*)\ge \mu_{\mathbf{p}}(\mathcal{A})$,
with  strict inequality  whenever $p_{\ell}<\frac{\ell-t}{2(\ell-1)}$. Moreover, $\mathcal{A}^*$  has smaller extent than $\mathcal{A}$. Hence, by the induction hypothesis on the extent, its measure is at most $\prod_{i=1}^t p_i$. 
Observe that if $p_{t+2}  < \frac{1}{t+1}$, then $p_{\ell}<\frac{\ell-t}{2(\ell-1)}$.
Therefore, $\mu_{\mathbf{p}}(\mathcal{A})\le \mu_{\mathbf{p}}(\mathcal{A}^*)\le \prod_{i=1}^t p_i$, with strict inequality in the case $p_{t+2}  < \frac{1}{t+1}$.

We now record the following lemma, which will be needed for the equality case.

\begin{lemma}\label{cla}
Let $\mathcal{A} \subseteq 2^{[n]}$ be a monotone $t$-intersecting family. 
Suppose that for some pair $i<j \in [n]$, the family 
$s_{i,j}(\mathcal{A})= \{ B \subseteq [n] : S\subseteq B \}$ for some $S\in \binom{[n]}{t}$.
Then $\mathcal{A} =\{ A \subseteq [n] : T \subseteq A \}$ for some $T\in \binom{[n]}{t}$.
\end{lemma}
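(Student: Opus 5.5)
The plan is to compare $\mathcal{A}$ with the star $\mathcal{S}=\{B\subseteq[n]: S\subseteq B\}=s_{i,j}(\mathcal{A})$ set by set. Recall that $s_{i,j}$ is injective on $\mathcal{A}$. For every $B\in\mathcal{S}$, either $B\in\mathcal{A}$ and $B$ is its own preimage, or $B=(A\setminus\{j\})\cup\{i\}$ for some $A\in\mathcal{A}$ with $j\in A$, $i\notin A$ and $B\notin\mathcal{A}$; in the second case $i\in B$ and $j\notin B$. I will split into cases according to how $\{i,j\}$ meets $S$.

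\emph{Case (a): $i\notin S$, or $j\in S$.} Take $B\in\mathcal{S}$ and suppose it is the image of some $A\ne B$. If $i\notin S$, then $S\subseteq B\setminus\{i\}\subseteq A$, so $A\in\mathcal{S}$. If $j\in S$, then $j\in B$, which is impossible for a moved image. In the subcase $j\in S$, no member of $\mathcal{S}$ is moved, so $\mathcal{S}\subseteq\mathcal{A}$. Since $|\mathcal{A}|=|s_{i,j}(\mathcal{A})|=|\mathcal{S}|$, this gives $\mathcal{A}=\mathcal{S}$. In the subcase $i\notin S$, the preimage $A\in\mathcal{S}$ does not contain $i$, so $A$ is not the image of any moved set. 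Hence $A\in s_{i,j}(\mathcal{A})$ only if $A$ is fixed by $s_{i,j}$, but $A$ was moved to $B$. This contradicts $s_{i,j}(\mathcal{A})=\mathcal{S}\ni A$. So again nothing moves and $\mathcal{A}=\mathcal{S}$.

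\emph{Case (b): $i\in S$, $j\notin S$.} This is the main case. Put $S'=(S\setminus\{i\})\cup\{j\}$ and $Y=[n]\setminus(S\cup\{j\})$. Sets $B\supseteq S\cup\{j\}$ cannot be moved images, so they lie in $\mathcal{A}$. For each $X\subseteq Y$, the set $S\cup X$ either lies in $\mathcal{A}$ or is the image of $S'\cup X\in\mathcal{A}$. By the cardinality count $|\mathcal{A}|=|\mathcal{S}|$, $\mathcal{A}$ consists exactly of the sets $\supseteq S\cup\{j\}$ together with exactly one of $S\cup X$ and $S'\cup X$ for each $X\subseteq Y$. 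Let
\[
\mathcal{X}=\{X\subseteq Y: S\cup X\in\mathcal{A}\},
\]
so that its complement $\mathcal{Z}=2^{Y}\setminus\mathcal{X}$ is $\{X\subseteq Y: S'\cup X\in\mathcal{A}\}$.

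Monotonicity of $\mathcal{A}$ makes $\mathcal{X}$ an up-set. It also makes $\mathcal{Z}$ an up-set: if $S'\cup X\in\mathcal{A}$ and $X\subseteq X'\subseteq Y$, then $S'\cup X'\in\mathcal{A}$, since $S'\cup X'\supseteq S'\cup X$. The complement of an up-set is a down-set, so $\mathcal{Z}$ is both an up-set and a down-set in $2^Y$. Since $2^Y$ is connected under adding and removing single elements, $\mathcal{Z}$ is either $\emptyset$ or all of $2^Y$.

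If $\mathcal{Z}=\emptyset$, then $\mathcal{A}=\mathcal{S}$. If $\mathcal{Z}=2^Y$, then $\mathcal{A}=\{A: S'\subseteq A\}$, the $t$-star at $S'$. Either way the lemma holds. In this route the $t$-intersecting hypothesis is not even needed; it could alternatively be used to show that $\mathcal{X}$ and $\mathcal{Z}$ cross-intersect.

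The step I expect to be most delicate is the bookkeeping in case (b). One must check that the pairing $S\cup X\leftrightarrow S'\cup X$ together with the count $|\mathcal{A}|=|\mathcal{S}|$ really forces \emph{exactly} one member of each pair into $\mathcal{A}$, and that no other sets of $\mathcal{A}$ exist. For instance, a set lacking $S\setminus\{i\}$ would be fixed by $s_{i,j}$ or mapped to a set outside $\mathcal{S}$, contradicting $s_{i,j}(\mathcal{A})=\mathcal{S}$.
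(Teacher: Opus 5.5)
Your proof is correct, but it takes a much longer route than the paper. The paper does not split according to how $\{i,j\}$ meets $S$, and it does not reconstruct $\mathcal{A}$ set by set. It looks only at the minimal set $S$ of the star. Since $S\in s_{i,j}(\mathcal{A})$, either $S\in\mathcal{A}$, or $S=(T\setminus\{j\})\cup\{i\}$ for some $T\in\mathcal{A}$, in which case $|T|=t$. By monotonicity the entire $t$-star at $S$ (respectively at $T$), which has $2^{n-t}$ members, is contained in $\mathcal{A}$. Since $|\mathcal{A}|=|s_{i,j}(\mathcal{A})|=2^{n-t}$ by injectivity of the shift, the two coincide.

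Your closing step in case (b) is this same argument in disguise. Instead of showing that $\mathcal{Z}$ is both an up-set and a down-set, you could note that $\emptyset\in\mathcal{X}$ or $\emptyset\in\mathcal{Z}$, i.e.\ $S\in\mathcal{A}$ or $S'\in\mathcal{A}$, and let monotonicity and the count finish. Used at the outset, that observation makes case (a) unnecessary: there $S$ cannot be a moved image, so $S\in\mathcal{A}$ directly. It also removes the pairing $S\cup X\leftrightarrow S'\cup X$ and the bookkeeping you flagged as delicate.

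What your version adds is an explicit description of which sets $s_{i,j}$ moves. The lemma does not need this, and it can be read off anyway once $\mathcal{A}$ is known to be a star. As you observe, neither argument uses the $t$-intersecting hypothesis.
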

\begin{proof}
Since the shifting operation preserves cardinality, we have
$
|\mathcal{A}| = |s_{i,j}(\mathcal{A})| = 2^{n-t}. 
$
Since $S \in s_{i,j}(\mathcal{A})$,  either $S \in \mathcal{A}$ or there exists $T \in \mathcal{A}$ such that $S=(T\backslash \{j\})\cup\{i\}\notin \mathcal{A}$.
If  $S \in \mathcal{A}$, then by monotonicity,  $
\{A \subseteq [n] : S\subseteq A \} \subseteq \mathcal{A}$.
Both sides have size $2^{n-t}$. Hence, 
$\mathcal{A}=\{A \subseteq [n] : S\subseteq A \}$.
If instead $T \in \mathcal{A} $, again by monotonicity,
$
\{ A \subseteq [n] : T \subseteq A \} \subseteq \mathcal{A}.
$
It follows from $
|\mathcal{A}| =  2^{n-t}. 
$ that
$
\mathcal{A} = \{ A \subseteq [n] : T \subseteq A \}.
$
\end{proof}

Now assume $p_{t+2}  < \frac{1}{t+1}$ and
$\mu_\textbf{p}(\mathcal{A})= \prod_{i=1}^t p_i$.
Then $\mathcal{A}$ and  $s_{i,j}(\mathcal{A})$ must be monotone.
If $\mathcal{A}$ is shifted, the preceding  argument forces  $\mathcal{A} = \{ A \subseteq [n] : [t]\subseteq A \}$. If $\mathcal{A}$ is not shifted, then Lemma  \ref{cla} imply that $\mathcal{A}=\{A\subseteq [n]: T\subseteq A\}$ for some $T\in \binom{[n]}{t}$. It follows that $\prod_{i\in T} p_i=\prod_{i=1}^t p_i$.
This completes the proof of Theorem \ref{Th2}.

\section{Concluding Remarks}\label{se4}

In this paper, we have investigated a non-uniform $\mathbf{p}$-biased measure problem for $t$-intersecting families. Our main result, Theorem \ref{Th2}, establishes that for any probability vector $\textbf{p}=(p_1,p_2,\ldots,p_n)$ satisfying 
$1>p_1\ge\cdots\ge p_n>0$ and $p_{t+2}\le\frac{1}{t+1}$, every $t$-intersecting family $\mathcal{A}\subseteq 2^{[n]}$ satisfies
$$
\mu_{\mathbf{p}}(\mathcal{A}) \le \prod_{i=1}^t p_i,
$$
with equality characterized by $t$-stars when $p_{t+2}<\frac{1}{t+1}$. This generalizes the classical result  of Fishburn-Frankl-Freed-Lagarias-Odlyzko \cite{F86} and Friedgut \cite{F08}, and confirms the Suda-Tanaka-Tokushige conjecture \cite{S17} as a special case $t=1$.
The proof relies on the shifting technique and the generating set method. This approach is purely combinatorial, in contrast to the spectral method of Friedgut \cite{F08} and Tokushige \cite{T22}, and the semidefinite programming method of Suda-Tanaka-Tokushige \cite{S17}.

A natural question is whether Theorem \ref{Th2} can extend to the cross $t$-intersecting setting. The straightforward analogue would require the same condition on the $(t+2)$-nd  largest probabilities for both families. However, this formulation fails. The following example, valid for every $t\ge 1$, demonstrates the obstruction.
\begin{example}\label{E1}
Let $n\geq t+2$ and let  $\textbf{p}=(p_1,p_2,\ldots,p_n)$,  $\textbf{q}=(q_1,q_2,\ldots,q_n)$ be probability vectors satisfying 
 $$1 > p_1 \ge p_2 \ge \cdots \ge p_{t+2}\geq p_j > 0, \ \ 1 > q_1 \ge q_2 \ge \cdots \ge q_{t+2}\geq q_j > 0, \ \ j\in [t+2, n].$$
 Take
$
p_1=\cdots=p_{t+1}=\frac{1}{2},\ p_{t+2}=\frac{1}{2(t+1)}$ and $
q_1=\cdots=q_{t+1}=\frac{3}{4},\ q_{t+2}=\frac{1}{2(t+1)}$.
Then $p_{t+2}=q_{t+2}=\frac{1}{2(t+1)}<\frac{1}{t+1}$.  Define
$$
\mathcal{A}=\{A\subseteq[n]:|A\cap[t+1]|\ge t\},\qquad
\mathcal{B}=\{B\subseteq[n]:[t+1]\subseteq B\}.
$$
These families are cross $t$-intersecting. A direct computation gives
$
\mu_{\mathbf{p}}(\mathcal{A})=\frac{t+2}{2^{t+1}},\ 
\mu_{\mathbf{q}}(\mathcal{B})=\left(\frac34\right)^{t+1}.
$
Hence,
$$
\mu_{\mathbf{p}}(\mathcal{A})\mu_{\mathbf{q}}(\mathcal{B})
=
\frac{t+2}{2^{t+1}}\left(\frac34\right)^{t+1}
>
\left(\frac38\right)^t
=
\prod_{i=1}^t p_iq_i.
$$
\end{example}

Example \ref{E1} suggests that a valid cross version requires the stronger condition
$
p_{t+1}\le \frac{1}{t+1},~ q_{t+1}\le \frac{1}{t+1}.
$
The following  lemma provides a reduction to  the uniform  setting under this strengthened assumption.

\begin{lemma}\label{c42}
Let $\mathcal{A}\subseteq 2^{[n]}$ be monotone. Let $\textbf{p}=(p_1,p_2,\ldots,p_n)$ be a probability vector satisfying 
 $$1 > p_1 \ge p_2 \ge \cdots \ge p_{t+1}\geq p_j > 0,  \ \ j\in [t+2, n].$$
Put $p=p_{t+1}$. Then
$$
\frac{\mu_{\mathbf p}(\mathcal{A})}{\prod_{i=1}^t p_i}
\le
\frac{\mu_p(\mathcal{A})}{p^t}.
$$
\end{lemma}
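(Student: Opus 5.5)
The plan is to pass from $\mathbf p$ to the uniform vector $(p,\dots,p)$ one coordinate at a time. At each step I track how the ratio of the measure to the appropriate product of biases changes. The only tool needed is the decomposition of a product measure along a single coordinate, together with monotonicity of $\mathcal A$.

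Fix a coordinate $k$ and a probability vector $\mathbf q$. Write $\mathbf q^{(k\to s)}$ for $\mathbf q$ with $q_k$ replaced by $s$. Conditioning on whether $k\in A$ gives
$$
\mu_{\mathbf q^{(k\to s)}}(\mathcal A)=s\,x+(1-s)\,y=:f(s),
$$
where $x$ and $y$ are the measures of $\mathcal A(k)$ and $\mathcal A(\bar k)$, viewed as families on $[n]\setminus\{k\}$ with the product measure given by the remaining coordinates of $\mathbf q$. Since $\mathcal A$ is monotone, $\mathcal A(\bar k)\subseteq\mathcal A(k)$, so $x\ge y\ge 0$. This yields two facts:

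(i) $f(s)=y+s(x-y)$ is nondecreasing in $s$;

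(ii) $f(s)/s=x+(1/s-1)\,y$ is nonincreasing in $s$ on $(0,1)$.

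\textbf{Step 1 (raise the tail).} For each $j\in[t+2,n]$ we have $p_j\le p$. Replace $p_j$ by $p$, one coordinate at a time, and apply (i) each time. This gives $\mu_{\mathbf p}(\mathcal A)\le\mu_{\mathbf p'}(\mathcal A)$, where $\mathbf p'=(p_1,\dots,p_t,p,p,\dots,p)$. Note that $p_{t+1}=p$ already.

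\textbf{Step 2 (lower the head).} For $i=1,\dots,t$ in turn, replace $p_i$ by $p$; since $p_i\ge p$, (ii) applies. Let $\mathbf q$ be the current vector, whose $i$-th coordinate is still $p_i$. Then
$$
\frac{\mu_{\mathbf q}(\mathcal A)}{p_i}\le\frac{\mu_{\mathbf q^{(i\to p)}}(\mathcal A)}{p}.
$$
Chaining these $t$ inequalities, dividing by the untouched factors $\prod_{i'\ne i} p_{i'}$ or $p$ at each stage, gives
$$
\frac{\mu_{\mathbf p'}(\mathcal A)}{\prod_{i=1}^t p_i}\le\frac{\mu_p(\mathcal A)}{p^t}.
$$
Combining with Step 1 proves the lemma.

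The only point needing care is that each one-coordinate step is applied to the \emph{current} vector. The quantities $x$ and $y$ depend on the other coordinates, but they satisfy $x\ge y$ for every choice of the others, because the inclusion $\mathcal A(\bar k)\subseteq\mathcal A(k)$ is purely set-theoretic. So (i) and (ii) hold at every stage, and the argument has no genuine obstacle beyond organizing this induction cleanly.
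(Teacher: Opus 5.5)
Your proposal is correct and follows essentially the same route as the paper: replace one coordinate at a time, use the decomposition $\mu = s\,x + (1-s)\,y$ with $x \ge y$ from monotonicity, and conclude that the measure is nondecreasing in each tail coordinate while (measure)$/p_i$ is nonincreasing in each head coordinate. The differences are cosmetic: you raise the tail before lowering the head, and you state explicitly that $x \ge y$ holds for the current vector at every stage.
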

\begin{proof}
Let
$$
R(\mathbf p)=\frac{\mu_{\mathbf p}(\mathcal{A})}{\prod_{i=1}^t p_i}.
$$
For $i\le t$,  write
$
\mu_{\mathbf p}(\mathcal{A})=p_i u+(1-p_i)v,
$
where
$
u=\mu_{\mathbf p}^{[n]\backslash \{i\}}(\mathcal{A}(i))$, $ 
v=\mu_{\mathbf p}^{[n]\backslash \{i\}}(\mathcal{A}(\bar{i})) 
$
and $\mu_{\mathbf{p}}^{[\ell]}$ denotes the restriction of $\mu_{\mathbf{p}}$ on $[\ell]$.
Since $\mathcal{A}$ is monotone, we have $\mathcal{A}(\bar{i})\subseteq \mathcal{A}(i)$. Then
$u\ge v$.
The part of $R(\mathbf p)$ depending on $p_i$ is
$$
\frac{p_i u+(1-p_i)v}{p_i}
=
u-v+\frac{v}{p_i}.
$$
This is non-increasing as a function of $p_i$. Since $p_i\ge p$ for $i\le t$, decreasing $p_i$ to $p$ cannot decrease $R(\mathbf p)$.
For $j>t$, the denominator $\prod_{i=1}^t p_i$ does not contain $p_j$. Write
$
\mu_{\mathbf x}(\mathcal{A})=p_j x+(1-p_j)y.
$
Then $x\ge y$ by monotonicity. Hence, $\mu_{\mathbf p}(\mathcal {A})$ is non-decreasing in $p_j$. Since $p_j\le p$ for $j>t$, increasing $p_j$ to $p$ cannot decrease $R(\mathbf p)$.

After performing these operations for all coordinates, every coordinate becomes $p$, and $R(\mathbf p)$ has not decreased at any step. Therefore,
$
R(\mathbf p)\le \frac{\mu_p(\mathcal{A})}{p^t}.
$
\end{proof}

By Lemma \ref{c42}, the strengthened condition
$
p_{t+1}\le \frac{1}{t+1}, q_{t+1}\le \frac{1}{t+1}
$
admits a clean reduction of the non-uniform measure problem to the uniform unequal-biased cross $t$-intersecting product measure setting.
Let $\mathbf p=(p_1,\dots,p_n)$ and $\mathbf q=(q_1,\dots,q_n)$ be probability vectors satisfying
$$
1>p_1\ge p_2\ge\cdots\ge p_{t+1}\ge p_j>0,\qquad
1>q_1\ge q_2\ge\cdots\ge q_{t+1}\ge q_j>0
$$
for all $j\in[t+2,n]$. Let $\mathcal A,\mathcal B\subseteq 2^{[n]}$ be cross $t$-intersecting, and assume
$
p_{t+1}\le \frac{1}{t+1},\ q_{t+1}\le \frac{1}{t+1}.
$
Under these assumptions, the  maximization of $\mu_{\mathbf p}(\mathcal A)\mu_{\mathbf q}(\mathcal B)$ reduces  to  
that of  $\mu_{p_{t+1}}(\mathcal A)\mu_{q_{t+1}}(\mathcal B)$, which is precisely Tokushige's cross $t$-intersecting conjecture \cite{T13}, see \cite{W25} for recent progress. Moreover, the same reduction applies to the non-uniform measures of $r$-cross intersecting families. Consequently, several results in the uniform setting immediately yield their non-uniform counterparts via a simple and unified argument.

Beyond the cross intersecting setting, the regime $p_{t+2}>\frac{1}{t+1}$ in  Theorem \ref{Th2}  remains largely unexplored. The triangular families suggest that Frankl-type families may be extremal, pointing toward a possible non-uniform generalization of the weighted Ahlswede-Khachatrian theorem \cite{ F17}.
This leads naturally to the  following problem.

 \begin{problem}\label{pb1}
Let $1 > p_1 \ge p_2 \ge \cdots \ge p_n > 0$ and $\textbf{p}=(p_1,p_2,\ldots,p_n)$. 
Suppose that $\mathcal{A} \subseteq 2^{[n]}$ is $t$-intersecting. Determine the maximum of $\mu_{\mathbf{p}}(\mathcal{A})$. In particular, is it true that
$$
\mu_{\mathbf{p}}(\mathcal{A}) \le \max_{r: t+2r\le n}\mu_{\mathbf{p}}(\mathcal{F}(t,r)),
$$ 
where $\mathcal{F}(t,r)=\{F\subseteq [n]: |F\cap[t+2r]|\geq t+r\}$.
\end{problem}

\section*{Declaration of competing interest}
We declare that we have no conflict of interest to this work.

\section*{Data availability}
No data was used for the research described in the article.


\end{document}